 \newtheorem{theorem}{Theorem}[section]
 \newtheorem{prop}[theorem]{Proposition}
 \newtheorem{corol}[theorem]{Corollary}
 \newtheorem{lemma}[theorem]{Lemma}
 \newtheorem*{main}{Theorem}
\numberwithin{equation}{section}
 \def\iff{if and only if }
 \def\sG{\mathsf G}	\def\sH{\mathsf H}
 \def\sS{\mathsf S}	\def\sN{\mathsf N}
 \def\mC{\mathbb C}	\def\mX{\mathbb X}
 \def\mZ{\mathbb Z}	\def\mN{\mathbb N}
 \def\mK{\mathbb K}	\def\mP{\mathbb P}
 \def\mA{\mathbb A} \def\Mk{\Bbbk}
 \def\bA{\mathbf A}	\def\bB{\mathbf B}
 \def\bW{\mathbf W}	\def\bH{\mathbf H}	
 \def\fD{\mathbf d}	\def\fP{\mathbf P}
 \def\fH{\mathbf h}	\def\fX{\mathbf x}
 \def\rA{\mathrm A}	\def\rD{\mathrm D}
 \def\rE{\mathrm E}	\def\rH{\mathrm H}
 \def\rQ{\mathrm Q}
 \def\cC{\mathscr C}	\def\cD{\mathscr D}
 \def\cH{\mathscr H}	\def\cP{\mathscr P}
 \def\cI{\mathscr I}	\def\cR{\mathscr R}
 \def\cT{\mathscr T}
 \def\kC{\mathcal C}	\def\kP{\mathcal P}
 \def\kQ{\mathcal Q}	\def\kH{\mathcal H}
 \def\kT{\mathcal T}	\def\kO{\mathcal O}
 \def\kF{\mathcal F}	\def\kD{\mathcal D}
 \def\gM{\mathfrak m}
 \def\al{\alpha}		\def\be{\beta}
 \def\si{\sigma}		\def\De{\Delta}
 \def\la{\lambda}	\def\th{\theta}
 \def\La{\Lambda}
 \def\setsuch#1#2{\left\{\,#1\mid #2\,\right\}}
\def\gnr#1{\langle\,#1\,\rangle}
 \def\row#1#2{\left( #1_1 , #1_2 , \dots , #1_{#2} \right)}
 \def\lst#1#2{ #1_1 , #1_2 , \dots , #1_{#2} }
 \def\sbe{\subseteq}	\def\mps{\mapsto}
 \def\mtr#1{\begin{pmatrix}#1\end{pmatrix}}
 \def\+{\oplus}		\def\*{\otimes}
 \def\bop{\bigoplus}	\def\xx{\times}
 \def\ti{\tilde}		\def\xarr{\xrightarrow}
 \def\8{\infty}		\def\bup{\bigcup}
 \def\md{\mbox{-}\mathrm{mod}}
  	\def\op{^\mathrm{op}}
 \def\df{\mbox{-}}		\def\ld{{^\wedge}}
 \def\Arr{\Rightarrow}	\def\larr{\longrightarrow}
 \def\ol{\overline}	\def\dd{\partial}
 \def\={\setminus}
 \def\End{\mathop\mathrm{End}\nolimits}
 \def\Aut{\mathop\mathrm{Aut}\nolimits}
 \def\Hom{\mathop\mathrm{Hom}\nolimits}
 \def\Ker{\mathop\mathrm{Ker}}
 \def\im{\mathop\mathrm{Im}}
 \def\Mat{\mathop\mathrm{Mat}}
 \def\cok{\mathop\mathrm{Cok}}
 \def\Md{\mbox{-}\mathrm{Mod}}
 \def\pmd{\mbox{-}\mathrm{mod^p}}
 \def\Pmd{\mbox{-}\mathrm{Mod^p}}
 \def\Pro{\mbox{-}\mathrm{Proj}}
 \def\pro{\mbox{-}\mathrm{proj}}
 \def\per{^\mathrm{per}}
 \def\gd{\mathop\mathrm{gl.dim}}
 \def\gl{\mathop\mathrm{GL}}
 \def\El{\!\mbox{-}\mathrm{El}}
 \def\el{\!\mbox{-}\mathrm{el}}
 \def\Dim{\mathop\mathbf{dim}\nolimits}
 \def\pd{\mathop\mathrm{pr.dim}\nolimits}
 \def\ext{\mathop\mathrm{Ext}\nolimits}
 \def\rep{\mathop\mathrm{rep}\nolimits}
 \def\rpp{\mathrm{rep}^{\mathrm p}}
 \def\tl{\otimes^{\mathbb L}}		\def\pe{\preccurlyeq}
 \def\rh{\mathop{\mathbb R\mathrm{Hom}}\nolimits}
 \def\hN{\hat\sN}	\def\hP{\hat P} 	\def\lt{\ltimes}
 \def\bd{\boldsymbol\delta}	\def\1{\mathbf1}
 	\def\ind{\mbox{-}\mathrm{ind}}
 \def\vd{vector dimension}	\def\cod{\mathop\mathrm{codim}\nolimits}
 \def\stab{\mathop\mathrm{Stab}}
 \def\hG{\hat\sG}	\def\hH{\hat\sH}	\def\tG{\tilde\sG}
 \def\hgl{\mathop{\widehat{\mathrm{GL}}}\nolimits}
\begin{document}

 \title[Tilting, deformations and representations of linear groups]
{Tilting, deformations and representations\\ of linear groups over
 Euclidean algebras}
 \author{Viktor Bekkert}
 \author{Yuriy Drozd}
 \author{Vyacheslav Futorny}
 \address{Departamento de Matem\'atica, Instituto de Ci\^encias Exatas,
 Universidade Federal de Minas Gerais, Belo Horizonte, Brazil}
 \email{bekkert@mat.ufmg.br} 
 \address{Institute of Mathematics, National Academy of Sciences of Ukraine,
 Kiev, Ukraine}
 \email{drozd@imath.kiev.ua}
 \address{Instituto de Matem\'atica e Estat\'istica, Universidade de S\~ao Paulo,
 S\~ao Paulo, Brazil}
 \email{futorny@ime.usp.br}
 \subjclass[2000]{Primary 22E47, Secondary 16G20, 18E30}
 \keywords{Unitary representations, dual space, mixed Lie groups, linear groups, bimodule categories,
 derived categories, deformations, representations of quivers}
 \begin{abstract}
  We consider the dual space of linear groups over Dynkinian and Euclidean algebras, i.e.
 finite dimensional algebras derived equivalent to the path algebra of Dynkin or Euclidean quiver.
 We prove that this space contains an open dense subset isomorphic to the product of
 dual spaces of full linear groups and, perhaps, one more (explicitly described) space.
 The proof uses the technique of bimodule categories, deformations and representations
 of quivers.
 \end{abstract}
 \maketitle

 \section*{Introduction}
 \label{intro}

 Let $\sG$ be a Lie group. Denote by $\hG$ its \emph{dual space}, i.e. the space of
 irreducible unitary representations of $G$ in Hilbert spaces. It is a topological (though
 often non-Hausdorff) space \cite{kir}; moreover, if $\sG$ is of \emph{type~I} (or
 \emph{domestic}), there is a natural measure $\mu_\sG$ (the \emph{Plancherel measure})
 on $\hG$, which inables a harmonic analysis on $\sG$ \cite{kl}. The structure of $\hG$
 is thoroughly investigated in two cases: that of \emph{solvable} groups, when the
 \emph{orbit method} \cite{kir} gives rather complete information, and that of
 \emph{reductive} groups. Essentially less is known about the ``\emph{mixed}'' groups,
 i.e. those which are neither solvable nor reductive. For instance, in the classical survey
 \cite{zs} only two rather simple examples of such groups are considered. 

 Certain explanation of this situation was given by Li Sun Gen \cite{li}. Namely, he showed
 that if we deal with the groups of $2\xx2$ block triangular matrices 
 \[
  \sG(m,n)=\mtr{A&B\\0&C},\ A\in\gl(m),\ C\in\gl(n),\ B\in\Mat(m\xx n),
 \]
 then the problem of classification of unitary representations of these groups (for all values of
 $m,n$) is ``\emph{wild},'' i.e. contains the problem of classification of all finite dimensional
 representations of all finitely generated algebras, which seems hopeless. Nevertheless, the same
 paper proposed a certain approach to this problem. Namely, it was shown there that the space
 $\hG(m,n)$ contains an open dense subset $\tG$ isomorphic to $\hgl(d)$,
 where $d=\gcd(m,n)$. Moreover, if $\sG$ is any group of block triangular matrices (with
 any number of blocks), then $\hG$ contains an open dense subset $\tG$ isomorphic to
 $\prod_{i=1}^s\hgl(d_i)$ for some $s$ and $d_i$.

 These results were generalized by A.~Timoshin and the author \cite{ti0,dr}. The final result
 of \cite{dr} dealt with the \emph{linear groups over Dynkinian algebras}. We call
 \emph{linear groups} over an algebra $\bA$ the groups
 $\gl(P,\bA)=\Aut_\bA P$, where $P$ is a finitely generated projective $\bA$-module.
 If $P$ is free of rank $r$, then $\gl(P,\bA)\simeq\gl(r,\bA)$, but in general case
 there are more possibilities. For instance, if $\bA$ is the algebra of triangular matrices,
 we obtain as $\gl(P,\bA)$ the groups of block triangular matrices. A finite dimensional algebra
 $\bA$ is called \emph{Dynkinian} if it is \emph{derived equivalent} to the path algebra of a
 \emph{Dynkin quiver}, i.e. oriented graph such that its underlying non-oriented graph is a
 Dynkin scheme $\rA_n,\rD_n$ or $\rE_{6,7,8}$.  It was proved in \cite{dr} that if
 $\sG=\gl(P,\bA)$, where $\bA$ is a Dynkinain algebra over a locally compact field $\mK$,
 then $\hG$ contains an open dense subset $\tG\simeq\prod_{i=1}^s\gl(d_i,\mK)$.
 Using the results of \cite{kl}, one can show that this isomorphism reflects the Plancherel
 measure and $\tG$ is a support of this measure, so, in some sence, the obtained information
 is sufficient for the harmonic analysis. Note that all groups $\gl(P,\bA)$ are linear algebraic
 groups, so of type~I, thus they fit the framework of \cite{kl}.
 The proof used rather delicate matrix calculations in terms of the so
 called \emph{representations of boxes} and \emph{small reduction algorithm}.

 For linear groups over other algebras the picture becomes more complicated. In \cite{ti1,ti2}
 Timoshin considered the cases when $\bA$ is the \emph{Kronecker algebra}, i.e. the path
 algebra of the quiver
 \[
 \bullet \rightrightarrows \bullet,
 \] 
 and the path algebra of the quiver of type $\ti\rA_2$, i.e.
 \[
 \xymatrix@R=1ex{& \bullet \ar[dr] \\ \bullet \ar[ur] \ar[rr] && \bullet\,, }
 \]
 as well as over derived equivalent algebras over the field $\mC$ of complex numbers.
 He proved that in this case the dual space to a linear group $\gl(P,\bA)$ contains an open
 dense subset $\tG$ isomorphic to
 \[
 \prod_{i=1}^s\gl(d_i,\mC)\xx \mC^{(m)}/\sS_m\xx (\mC^\xx)^m,
 \]
 where $\mC^{(m)}=\setsuch{\row\la m}{\la_i\in\mC,\,\la_i\ne\la_j \text{  for } i\ne j}$,
 $\sS_m$ is the symmetric group naturally acting on $\mC^{(m)}$, $\mC^{(m)}/\sS_m$
 is the corresponding factor-space and $\mC^\xx$ is the
 multiplicative group of the field $\mC$. The proof was based again on matrix
 calculations, in particular, on the small reduction algorithm (slightly generalized).

 The aim of our paper is to generalize the last result to the linear groups over
 \emph{Euclidean algebras}, i.e. those, which are derived equivalent to the path algebras
 of Euclidean (extended Dynkin) quivers of type $\ti\rA_n,\,\ti\rD_n$ or $\ti\rE_{6,7,8}$.
 Namely, we prove the following result (Theorem~\ref{t52}):

  \begin{main}
     Let $\bA$ be a Euclidean algebra, $\sG=\gl(P,\bA)$ be a linear group over $\bA$.
 There is a subset $\tG\sbe\hG$, which is open, dense and support of the Plancherel measure,
  such that 
  \[
 \tG\simeq\prod_{i=1}^s\hgl(d_i,\mC) \xx \mX^{(m)}/\sS_m \xx (\mC^\xx)^m 
 \]
 for some values of $\lst ds$, $m$ and some cofinite subset $\mX\sbe\mP^1$.
 \end{main}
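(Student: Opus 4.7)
The plan is to proceed in three stages, in parallel with \cite{dr,ti1,ti2}. First, since $\bA$ is derived equivalent to the path algebra $\Mk Q$ of a Euclidean quiver $Q$, I would use a tilting complex realising this equivalence to transport $P$ to a complex of $\Mk Q$-modules whose derived endomorphism algebra controls $\gl(P,\bA)$. Following the bimodule/box framework of \cite{dr,ti1,ti2}, $\sG$ is then realised as the automorphism group of a distinguished object in a bimodule category $\bW$ attached to $Q$, so that the classification of generic unitary representations of $\sG$ matches the orbit structure of an explicit reductive group acting on representations of $\bW$. This step is formally analogous to the Dynkinian case, but now with the essential new feature that $\bW$ is no longer of finite representation type, so the small reduction algorithm of \cite{dr} alone cannot conclude.

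Next, I would stratify by the Auslander--Reiten theory of $Q$: indecomposable $\Mk Q$-modules fall into preprojectives, preinjectives, and regulars forming a $\mP^1$-family of tubes, all but finitely many being homogeneous of rank one. The canonical decomposition of the dimension vector of $P$ with respect to the Euler form of $Q$ dictates which indecomposables appear in a generic representation of $\bW$. Each indecomposable that must occur at a \emph{fixed} isomorphism class (preprojectives, preinjectives, and summands coming from the exceptional tubes) contributes, through the unitary dual of the automorphism group of its multiplicity space, a factor $\hgl(d_i,\mC)$ to $\tG$, giving the discrete product $\prod_{i=1}^s\hgl(d_i,\mC)$.

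Finally, the continuous part of $\tG$, which is the novelty compared to \cite{dr}, comes from the homogeneous tubes. Let $m$ be the maximal number of pairwise non-isomorphic homogeneous-tube indecomposables fitting, with multiplicity one each, in a generic decomposition of $P$; this integer and the cofinite subset $\mX\sbe\mP^1$ of admissible tube parameters (the complement of the finitely many exceptional-tube points plus possibly additional finitely many points forbidden on dimension grounds) are read off the dimension vector. An unordered $m$-tuple of distinct parameters in $\mX$ yields the factor $\mX^{(m)}/\sS_m$, while the local endomorphism ring of each of the $m$ chosen regular indecomposables has residue field $\mC$, whose group of unitary characters is $\mC^\xx$, supplying the factor $(\mC^\xx)^m$. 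The hard part will be the precise control of this homogeneous stratum: one must verify that small deformations of the $m$ tube parameters produce actually inequivalent unitary representations and that the resulting map into $\hG$ is an open embedding compatible with the Plancherel measure. Granted this, openness and density of $\tG$ follow from a Kac-type density argument on the representation scheme of $\bW$, and the measure-support claim follows from the type~I property of linear algebraic groups together with the orbital integral formulae of \cite{kl} already used in \cite{dr}.
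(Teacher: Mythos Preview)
Your outline captures the right heuristic picture---the discrete factors should come from the ``rigid'' summands and the continuous family from the homogeneous tubes---but it skips the mechanism that actually connects this to $\hG$, and in doing so it misreads the shape of the argument.

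The paper does not decompose $P$ (or its tilt) once and for all and then read off the factors. Instead it chooses an idempotent $e\in\bA$ with $e\bA e\simeq\mC$ and $(1-e)\bA e=0$, writes $\bA\simeq\bA_1[\bW]\bA_2$, and obtains a genuine semidirect product $\sG\simeq\sH\ltimes\sN$ with $\sN$ abelian. Mackey's theorem then gives a surjection $\hG\to\hat\sN/\sH$ whose fibre over an orbit $\sH\chi$ is $\hat\sH_\chi$. The algebraic input is Theorem~\ref{t48}: on the $\kT$-sheet of $\ld\bW(P_2,P_1)$ either the generic element $w$ is partial tilting (single orbit, $\End w$ again Euclidean or Dynkinian), or it splits as $M'\oplus M''$ with $M''$ a \emph{fixed} partial tilting module having Dynkinian endomorphism ring and $\kT M'\simeq\bigoplus_{i=1}^m\kF(1,\la_i)$ for $\row\la m\in\mX^{(m)}$. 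Either way the stabiliser $\sH_w$ is a linear group over a strictly smaller Euclidean or Dynkinian algebra, and one \emph{inducts}; the Dynkinian case (Theorem~\ref{t51}) terminates the recursion and supplies the $\prod_i\hgl(d_i,\mC)$. Your proposal never invokes Mackey explicitly and never sets up this induction; without them there is no passage from an AR-theoretic decomposition of a complex to the topology and measure of $\hG$.

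Two further points. First, your description of the discrete part is off: in the situation of Theorem~\ref{t48} the generic module either has \emph{all} summands preprojective/preinjective (whence it is partial tilting and there is no continuous family at this step), or \emph{all} summands are regular; you cannot have both kinds simultaneously in the open sheet, by Lemma~\ref{l44}(1). Second, the ``hard part'' you flag---that varying the $\la_i$ really parametrises inequivalent orbits and fills an open set---is handled in the paper not by orbital integrals but by a bimodule Riedtmann--Zwara theorem (Theorem~\ref{t21}) together with the codimension formula of Corollary~\ref{c32} and the comparison Lemma~\ref{l45}/Corollary~\ref{c46}; these are the tools you would need to supply to make your sketch into a proof.
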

 
 The main distinction is that we use, instead of matrix calculations, the deformation technique,
 the \emph{tilting theory} and the results on the representations of Euclidean quivers
 from \cite{kac,rib}. Since it simplifies the considerations, we also included a new
 simple proof of the result for Dynkinian algebras from \cite{dr}. Actually, we had to
 extend the deformation and tilting theories to the \emph{bimodule categories}, in
 particular, to prove for them analogues of the well-known
 Riedtmann--Zwara theorem about deformations of modules \cite{rie,zw}. It is done
 in Sections~1 and~2. In section~3 we recall the facts concerning the Tits and the Euler
 forms and extend them to bimodule categories too. Section~4 contains the basic
 results on elements in general position in the bimodule categories of Dynkinian and
 Euclidean bimodules, and in Section~5 these results are applied to the proof of the
 main theorems about representations of linear groups over Dynkinian and
 Euclidean algebras. 

 Note that, though our proofs are essencially simpler than those from \cite{dr}, they are
 not so constructive. To get explicit shape of representations from the subset $\tG$, one
 still has to apply a sort of the small reduction algorithm. Our results then guarantee that
 one gets an answer after finitely many steps. Typical examples can de found in \cite{dr,ti1,ti2}.

 \smallskip\emph{Acknowledgements.} This investigation was accomplished during the visit
 of the second author to the University of S\~ao Paulo supported by Fapesp (processo 2007/05047-4).
 The second author was also partially supported by INTAS Grant 06-1000017-9093.
 The third author was partially supported by Fapesp (processo 2005/60337-2) and CNPq (processo
 301743/2007-0).
  
 \section{Derived bimodule categtories}
 \label{s1}
 
 Let $\bA$ be a ring. We denote by $\bA\Md$ the category of all (left) $\bA$-modules and 
 by $\bA\md$ the category of finitely generated (left) $\bA$-modules. By $\bA\Pro$ we denote
 the subcategory of projective $\bA$-modules and by $\bA\pro$ the subcategory of finitely generated
 projective $\bA$-modules.  Let $\bW$ be an $\bA$-bimodule. The \emph{bimodule category}
 (or the \emph{category of elements of the bimodule}, or of \emph{matrices over the bimodule}
 $\bW$) $\bW\El$ is defined as follows \cite{db}.
 \begin{itemize}
 \item  The \emph{objects} of $\bW\El$ are elements of groups
 $\bW(P)=\Hom_\bA(P,\bW\*_\bA P)$, where $P\in\bA\Pro$; we call them \emph{elements
 of the bimodule} $\bW$.
 \item  A \emph{morphism} from $w\in\bW(P)$ to $w'\in\bW(P')$ is a homomorphism
 $\al:P\to P'$ such that $(1\*\al) w=w'\al$. We denote the set of such morphisms by $\Hom_\bW(w,w')$.
 \item  The product of morphisms coincides with their compositions as maps.
 \end{itemize}
 The category $\bW\El$ is additive; moreover, it is \emph{fully additive} (Karoubian), i.e. every its
 idempotent endomorphism in it splits. We denote by $\bW\el$ the full subcategory consisting of
 elements of the groups $\bW(P)$ with $P\in\bA\pro$.

 The bimodule category is an \emph{exact category} in the sense of \cite{qu} (we use the terminology
 of \cite{gr} or \cite[Appendix~A]{kel}). Namely, \emph{conflations} (distinguished exact sequences)
 in $\bW\El$ are sequences $w_1\xarr\al w_2\xarr\be w_3$, where $w_i\in\bW(P_i)$, such
 that the underlying sequence $0\to P_1\xarr\al P_2\xarr\be P_3\to0$ is exact. Thus,
 $\al:w_1\to w_2$ is an \emph{inflation} if the underlying map $\al:P_1\to P_2$ is a split
 monomorphism, and $\be:w_2\to w_3$ is a \emph{deflation} if the underlying map
 $\be:P_2\to P_3$ is an epimorphism (automatically split, since $P_3$ is projective). Therefore,
 one can consider its \emph{derived category} $\cD(\bW\El)$ \cite{nee}. We denote
 by $\cC(\bW\El)$ the category of complexes over $\bW\El$ and by $\cH(\bW\El)$ its factor-category
 modulo homotopy. A complex $\kC=(\kC_n,d_n)$ is said to be \emph{exact} if the underlying complex
 of projective $\bA$-modules is exact and $\Ker d_n$ splits for every $n$ (then also $\im d_n$
 splits for every $n$). Note that if this complex is \emph{right bounded}, i.e. $\kC_n=0$ for $n\ll0$,
 it is exact \iff the underlying complex of modules is exact. The derived category $\cD(\bW\El)$ is the
 localization of $\cH(\bW\El)$ with respect to the full subcategory of exact complexes (it is a triangular
 and \'epaisse subcategory \cite{nee}). As usually, we denote by marks $^-$ and $^b$ the full subcategories
 of $\cC(\bW\El),\,\cH(\bW\El)$ and $\cD(\bW\El)$ consisting, respectively, of right bounded and
 two-sided bounded complexes. The full subcategories of $\cC(\bW\El),\,\cH(\bW\El)$ and $\cD(\bW\El)$
 consisting of complexes over $\bW\el$ are denoted by $\cC(\bW\el),\,\cH(\bW\el)$ and
 $\cD(\bW\el)$ (with the marks $^-$ or $^b$, if necessary). The set of morphisms $\kC\to \kC'$
 in $\cD(\bW\El)$ will be denoted by $\Hom_{\cD\bW}(\kC,\kC')$.

 The most important case is that of \emph{bipartite bimodules}. Recall that an $\bA$-bimodule
 $\bW$ is said to be \emph{bipartite} if $\bA=\bA_1\xx\bA_2$ and $\bW$ is an
 $\bA_2\df\bA_1$-bimodule, where the $\bA$-action is defined as $(a_1,a_2)w=a_2w$ and
 $w(a_1,a_2)=wa_1$. If $P$ is a projective $\bA$-module, then $P=P_1\+P_2$, where $P_i$
 is a projective $\bA_i$-bimodule, and
 $\bW(P)=\bW(P_2,P_1)=\Hom_{\bA_2}(P_2,\bW\*_{\bA_1}P_1)$. A morphism $\al:w\to w'$,
 where $w\in\bW(P_2,P_1),\,w'\in\bW(P'_2,P'_1)$ is then a pair $(\al_1,\al_2)$, where
 $\al_i:P_i\to P'_i$, such that $(1\*\al_1)w=w'\al_2$.

 In what follows we mainly deal with the situation, when an $\bA_2\df\bA_1$-bimodule arises
 as the \emph{left dual} $\,\ld\bW=\Hom_{\bA_1}(\bW,\bA_1)$ of a $\bA_1\df\bA_2$-bimodule
 $\bW$. If $\bW$ is finitely generated as $\bA_1$-module, the category $\ld\bW\El$ can be interpreted as
 that of some modules over the ring $\bA=\bA_1[\bW]\bA_2$ of triangular matrices of the form
 $\mtr{a_1&w\\0&a_2}$, where $a_i\in\bA_i,\ w\in\bW$. Namely, there is an isomorphism
 of functors (on $\bA\Pro$):
 \[
 \ld\bW\*_{\bA_1}P_1 \simeq \Hom_{\bA_1}(\bW,P_1)
 \]
 (mapping $f\*p\,$ to the homomorphism $w\mps f(w)p$, where $p\in P_1,\,f\in\ld\bW,\,
 w\in\bW$). Therefore,
 \[
 \ld\bW(P_2,P_1)\simeq\Hom_{\bA_2}(P_2,\Hom_{\bA_1}(\ld\bW,P_1)) \simeq
 \Hom_{\bA_1}(\ld\bW\*_{\bA_2}P_2,P_1). 
 \]
  It implies

 \begin{prop}\label{p11}
  Let $\bA=\bA_1[\bW]\bA_2$, where $\bW$ is finitely generated as left $\bA_1$-module.
 Then $\ld\bW\El$ is equivalent to the full subcategory $\bA\Pmd$
 of  the category $\bA\Md$ consisting of all modules $M$ such that $e_iM\ (i=1,2)$ is a 
 projective $\bA_i$-module, where $e_1=\mtr{1&0\\0&0},\ e_2=\mtr{0&0\\0&1}$.
 \end{prop}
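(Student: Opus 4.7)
The plan is to construct mutually quasi-inverse additive functors $F \colon \ld\bW\El \to \bA\Pmd$ and $G \colon \bA\Pmd \to \ld\bW\El$ out of the tensor--hom identifications recorded just before the proposition, which yield natural bijections
\[
\ld\bW(P_2,P_1) \;\simeq\; \Hom_{\bA_2}(P_2,\Hom_{\bA_1}(\bW,P_1)) \;\simeq\; \Hom_{\bA_1}(\bW\*_{\bA_2}P_2,P_1).
\]
Every ingredient needed for the equivalence is already present here; what remains is to package it as an equivalence of categories.

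First I would define $F$ on objects. Given $w \in \ld\bW(P_2,P_1)$, write $\ti w \colon \bW\*_{\bA_2}P_2 \to P_1$ for its image under the above iso. Set $F(w) := P_1 \+ P_2$, equipped with the $\bA$-action in which $\bA_i$ acts on $P_i$ in the evident way and an element $x \in \bW$ sends $(p_1,p_2) \mapsto (\ti w(x\*p_2),0)$. Associativity with respect to the triangular ring structure is routine, and by construction $e_iF(w) = P_i$ is projective over $\bA_i$, so $F(w) \in \bA\Pmd$. On morphisms, the defining relation $(1\*\al_1)w = w'\al_2$ of a pair $(\al_1,\al_2) \colon w \to w'$ in $\ld\bW\El$ translates under the adjunction to $\al_1\circ\ti w = \ti{w'}\circ(1\*\al_2)$, which is exactly the condition for $\al_1\+\al_2$ to respect the $\bW$-action; set $F(\al_1,\al_2) := \al_1 \+ \al_2$, which is then $\bA$-linear.

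Conversely I would define $G$ by decomposition. For $M \in \bA\Pmd$, put $P_i := e_iM$; these are projective $\bA_i$-modules by hypothesis and $M = P_1 \+ P_2$ as abelian groups. The residual $\bW$-action is an $\bA_1$-linear map $\bW\*_{\bA_2}P_2 \to P_1$, which under the natural bijection above corresponds to a unique $G(M) \in \ld\bW(P_2,P_1)$. Any $\bA$-linear map $f \colon M \to N$ decomposes as $f = f_1 \+ f_2$ with $f_i := e_ife_i$, and its $\bW$-linearity translates back to the defining compatibility of a morphism in $\ld\bW\El$. One then checks $GF = \mathrm{id}_{\ld\bW\El}$ tautologically (composing the two adjunction bijections returns the input), while $FG(M) \simeq M$ via the canonical identification $e_1M \+ e_2M \simeq M$, which is visibly $\bA$-linear.

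The only non-formal ingredient is the iso $\ld\bW\*_{\bA_1}P_1 \simeq \Hom_{\bA_1}(\bW,P_1)$ itself, which is where the hypothesis ``$\bW$ finitely generated as left $\bA_1$-module'' is used: one verifies it for $P_1 = \bA_1$ by hand, extends to free $P_1$ using that $\Hom_{\bA_1}(\bW,-)$ commutes with arbitrary direct sums whenever $\bW$ is finitely generated, and then passes to projective summands. Beyond this I do not anticipate any serious obstacle, since both functors and the natural isomorphisms between their compositions and the identity are forced, up to notation, by the adjunctions already recorded in the excerpt.
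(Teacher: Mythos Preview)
Your proposal is correct and follows essentially the same route as the paper's proof: both identify an $\bA$-module $M\in\bA\Pmd$ with the triple $(P_1,\phi,P_2)$ via $P_i=e_iM$ and the multiplication map $\phi:\bW\*_{\bA_2}P_2\to P_1$, then use the adjunction isomorphisms displayed before the proposition to match this with an element of $\ld\bW(P_2,P_1)$ and check that morphisms correspond. Your write-up is somewhat more explicit (you spell out both functors and where the finite-generation hypothesis enters), whereas the paper leaves the inverse construction as ``immediate,'' but there is no substantive difference in strategy.
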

 \begin{proof}
  Let $M\in\bA\Pmd$, $P_i=e_iM$. The multiplication with $e_1\bA e_2\simeq\bW$ induces a homomorphism
 $\phi:\bW\*_{\bA_2}P_2\to P_1$ and the module $M$ is uniquely defined by the triple $(P_1,\phi,P_2)$ (we
 usually write $M=(P_1,\phi,P_2)$), hence, by an element $f(M)\in\ld\bW(P_2,P_1)$. Every homomorphism
 $\al:M\to M'$, where $M'=(P_1',\phi',P_2')$, maps $P_i$ to $P_i'$, hence, defines a pair $(\al_1,\al_2),\ 
 \al_i:P_i\to P'_i$, such that $\al_1\phi=\phi'(1\*\al_2)$. One easily sees that the last condition
 is equivalent to the equality $(1\*\al_1)f(M)=f(M')\al_2$, thus
 $(\al_1,\al_2)\in\Hom_{\ld\bW}(f(M),f(M'))$. The inverse construction is immediate.
 \end{proof}

 The exact structure on $\ld\bW\El$ induces an exact structure on $\bA\Pmd$. The \emph{conflations} 
 in $\bA\Pmd$ are usual short exact sequences, the \emph{deflations} are usual epimorphisms and the
 \emph{inflations} are such monomorphisms $\al:M\to N$ that both restrictions
 $\al_i=\al|_{e_iM}:e_iM\to e_iN\ (i=1,2)$ split. Suppose now that $\bW$ is projective and finitely generated
 as left $\bA_1$-module. Then the category $\bA\Pmd$, hence $\ld\bW\El$, contains enough
 projective objects. Indeed, in this case $\bA$ itself belongs to $\bA\pmd$, thus $\bA\Pro\sbe\bA\Pmd$
 and a usual projective resolution of a module $M\in\bA\Pmd$ is in fact its projective resolution in $\bA\Pmd$.
 One easily sees that a module $(P_1,\phi,P_2)$ is projective \iff $\phi$ is a split monomorphism.  Moreover,
 the following result holds.
 
 \begin{prop}\label{p12}
 Let $\bW$ be projective and finitely generated as left $\bA_1$-module, $\bA=\bA_1[\bW]\bA_2$. 
  \begin{enumerate}
  \item  $\pd M\le1$ for every $M\in\bA\Pmd$.
  \item  If $\,\gd\bA_i\le n$ for both $i=1,2$, then $\,\gd\bA\le n+1$.
  \end{enumerate}
 \end{prop}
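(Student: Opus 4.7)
The plan is to make explicit the standard resolution for triangular matrix rings and then bootstrap from $\bA\Pmd$ to all of $\bA\Md$.

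For part (1), given $M=(P_1,\phi,P_2)\in\bA\Pmd$, I would construct the length-$1$ projective resolution by hand. Set $Q=(P_1\+(\bW\*_{\bA_2}P_2),\iota,P_2)$, where $\iota$ is the canonical inclusion of $\bW\*_{\bA_2}P_2$ into the second summand. Because $\bA e_2\simeq(\bW,\mathrm{id},\bA_2)$ and $\bA e_1\simeq(\bA_1,0,0)$, and using that $P_i$ is projective over $\bA_i$, the module $Q$ is a summand of a free $\bA$-module, hence projective. The map $Q\to M$ which is identity on $P_2$ and $(p_1,x)\mps p_1+\phi(x)$ on the first component is a deflation in the exact structure of $\bA\Pmd$ (both restrictions are split epimorphisms of projectives). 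A direct computation shows its kernel is $(\bW\*_{\bA_2}P_2,0,0)$; since $\bW$ is projective over $\bA_1$, the $\bA_1$-module $\bW\*_{\bA_2}P_2$ is a summand of $\bW^{(I)}$, hence projective, and therefore $(\bW\*_{\bA_2}P_2,0,0)$ is projective in $\bA\Md$ (being a summand of copies of $\bA e_1$). This gives $\pd M\le 1$.

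For part (2), I would argue by induction on $n$. The base case $n=0$ is immediate: both $\bA_i$ are semisimple, so every $\bA$-module lies in $\bA\Pmd$ and the bound follows from~(1). For the inductive step, given an arbitrary $N\in\bA\Md$ write $N=(e_1N,\phi_N,e_2N)$, choose surjections $\pi_i:P_i\twoheadrightarrow e_iN$ from projective $\bA_i$-modules, and form $M=(P_1\+(\bW\*_{\bA_2}P_2),\iota,P_2)\in\bA\Pmd$. The pair $(\pi_1+\phi_N\circ(1\*\pi_2),\pi_2)$ defines a surjection $M\to N$ compatible with the bimodule structure. Let $K$ be its kernel. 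Then $e_2K=\ker\pi_2$ has projective dimension at most $n-1$ over $\bA_2$, and there is an exact sequence
\[
0\to\ker\pi_1\to e_1K\to\bW\*_{\bA_2}P_2\to 0
\]
which splits because the right-hand term is projective over $\bA_1$; hence $\pd_{\bA_1}e_1K\le n-1$ as well. By the inductive hypothesis $\pd_\bA K\le n$, and combining with $\pd_\bA M\le 1$ from~(1) yields $\pd_\bA N\le n+1$.

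I do not anticipate a serious obstacle: both claims reduce to bookkeeping with the description $M=(P_1,\phi,P_2)$ via Proposition~\ref{p11}. The one point that requires care is checking that $\bW\*_{\bA_2}P_2$ is projective as a left $\bA_1$-module, which uses the hypothesis that $\bW$ itself is projective over $\bA_1$; this is precisely what makes the inductive splitting work and keeps the dimension bound tight at $n+1$ rather than $2n$ or worse.
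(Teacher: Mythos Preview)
Your argument for part~(1) is correct and coincides with the paper's: your $Q$ is exactly the middle term $(P_1,0,0)\+(\bW\*_{\bA_2}P_2,\1,P_2)$ of the standard resolution~\eqref{e11}, and your kernel computation matches.

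For part~(2), however, there is a genuine gap in the induction. You induct on $n$, where the hypothesis is $\gd\bA_i\le n$; the inductive hypothesis is therefore ``if $\gd\bA_i\le n-1$ for both $i$, then $\gd\bA\le n$.'' But in your inductive step the algebras $\bA_1,\bA_2$ have not changed: you still only know $\gd\bA_i\le n$, so the inductive hypothesis does not apply. What you have actually established is that $\pd_{\bA_i}e_iK\le n-1$ for the kernel $K$, which is a statement about a single module, not about the global dimension of $\bA_i$. The sentence ``By the inductive hypothesis $\pd_\bA K\le n$'' is therefore unjustified as written.

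The fix is easy: strengthen the claim to ``if $\pd_{\bA_i}e_iN\le m$ for $i=1,2$, then $\pd_\bA N\le m+1$'' and induct on $m$. Your base case (now $m=0$, i.e.\ $N\in\bA\Pmd$) and inductive step then go through verbatim, and part~(2) follows by taking $m=n$. The paper avoids induction altogether: it takes a partial projective resolution $P(n-1)\to\cdots\to P(0)\to N\to0$, observes that multiplying by $e_i$ keeps it exact, so $e_i\Ker\al$ is an $n$-th syzygy over $\bA_i$ and hence projective, whence $\Ker\al\in\bA\Pmd$; then one application of~(1) finishes. This is slightly shorter and sidesteps the bookkeeping with the split sequence for $e_1K$, but your (corrected) inductive approach proves the marginally sharper module-by-module bound.
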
 
 \begin{proof}
 (1) \  Let $M=(P_1,\phi,P_2)\in\bA\Pmd$. Choose an epimorphism $P'\to\cok\phi$,
 where $P'\in\bA_1\Pro$, and lift it to a homomorphism $P'\to P_1$. Then we get a projective
 resolution of $M$:
  \begin{multline}\label{e11}
  0\larr (\bW\*_{\bA}P_2,0,0)\xarr{\Big(\mtr{-\phi\\\1},\ 0\Big)}\\
         \larr (P_1,0,0)\+(\bW\*_{\bA_2}P_2,\1,P_2)
	\xarr{\big((\1\ \phi)\,,\ \1\big)} (P_1,\phi,P_2)\larr 0. 
 \end{multline}
 We call \eqref{e11} the \emph{standard resolution} of $M$.
 
 \smallskip
 (2) \  Let $N$ be any $\bA$-module. Consider an exact sequence 
 \begin{equation}\label{e12}
  P(n-1)\xarr\al P(n-2) \to\dots \to P(1) \to P(0)\to N\to 0
 \end{equation}
 with $P(k)\in\bA\Pro$. It remains exact after multiplication by $e_i$ ($i=1,2$) Since $\gd\bA_i\le n$,
 $\Ker\al\in\bA\Pmd$, hence it has a projective resolution $0\to P(n+1)\to P(n)\to \Ker\al\to0$.
 Combiming it with the sequence \eqref{e12}, we get a projective resolution of length $n+1$ for $N$.
 \end{proof} 

 \begin{corol}\label{c13}
  Let $\bW$ be projective as $\bA_1$-module. Then $\cD^-(\ld\bW\El)\simeq\cD^-(\bA\Md)$
 and $\cD^b(\ld\bW\El)\simeq\cD\per(\bA\Md)$. In particular, if $\gd\bA_i<\8$ for both $i=1,2$, then
 $\cD^b(\ld\bW\El)\simeq\cD^b(\bA\Md)$.
 \end{corol}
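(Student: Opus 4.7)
The plan is to reduce the statement to a standard equivalence between the derived category of an exact category with enough projectives and the homotopy category of its projectives. By Proposition~\ref{p11} we may replace $\ld\bW\El$ by $\bA\Pmd$, equipped with the exact structure whose conflations are those short exact sequences in $\bA\Md$ that restrict to split sequences on each $e_iM$. The inclusion $\bA\Pro\sbe\bA\Pmd$ supplies enough projectives, and for any $M=(P_1,\phi,P_2)\in\bA\Pmd$ the standard resolution \eqref{e11} is a length-one resolution in $\bA\Pmd$, so every object has $\pd M\le1$ by Proposition~\ref{p12}(1).

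First I would show that the inclusion $\bA\Pro\hookrightarrow\bA\Pmd$ induces a triangle equivalence $\cH^-(\bA\Pro)\simeq\cD^-(\bA\Pmd)$. Any right-bounded complex over $\bA\Pmd$ admits a quasi-isomorphic right-bounded replacement over $\bA\Pro$, by a degree-by-degree application of \eqref{e11} together with a Cartan--Eilenberg-style assembly; morphisms in the derived category are computed up to homotopy on the projective side because right-bounded acyclic complexes of projectives are contractible. The paper's observation that right-bounded exactness in $\bW\El$ coincides with exactness in $\bA\Md$ is what allows the standard argument to transport verbatim. On the other hand, $\cH^-(\bA\Pro)\simeq\cD^-(\bA\Md)$ is the classical equivalence via projective resolutions. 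Composing yields the first claim $\cD^-(\ld\bW\El)\simeq\cD^-(\bA\Md)$, and by construction the composite is induced by the forgetful embedding $\bA\Pmd\hookrightarrow\bA\Md$.

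Restricting to bounded complexes: if $X\in\cD^b(\bA\Pmd)$, its terms each have projective dimension $\le1$ over $\bA$, so assembling the length-one resolutions produces a bounded complex of $\bA$-projectives quasi-isomorphic to $X$, hence an object of $\cD\per(\bA\Md)$. Conversely, any bounded complex of projective $\bA$-modules lies in $\bA\Pmd$ termwise. Thus the previous equivalence restricts to $\cD^b(\ld\bW\El)\simeq\cD\per(\bA\Md)$. Finally, when $\gd\bA_i<\8$ for both $i=1,2$, Proposition~\ref{p12}(2) gives $\gd\bA<\8$, forcing $\cD\per(\bA\Md)=\cD^b(\bA\Md)$ since every bounded complex then admits a bounded projective resolution.

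The main obstacle I expect is the careful bookkeeping in the first step: verifying that the exact structure on $\bA\Pmd$ (whose inflations are not arbitrary monomorphisms but those split on each $e_iM$) is still rich enough for the projective-resolution argument to work, so that the resulting replacements are quasi-isomorphisms with respect to the correct class of conflations. This is exactly where the paper's remark that right-boundedness forces exactness in $\bW\El$ and in $\bA\Md$ to agree does the work, letting one borrow the classical proof unchanged.
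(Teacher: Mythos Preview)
Your proposal is correct and follows essentially the same approach as the paper: both establish the equivalences by passing through $\cH^-(\bA\Pro)$ (resp.\ $\cH^b(\bA\Pro)$), using that $\bA\Pmd$ has enough projectives and that every object has projective dimension at most one by Proposition~\ref{p12}(1). The paper simply states this as a ``known fact,'' while you spell out the Cartan--Eilenberg replacement; your worry about the exact structure is harmless, since conflations in $\bA\Pmd$ are exactly the usual short exact sequences (the splitting on each $e_iM$ is automatic because the quotient has projective $e_i$-parts).
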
 

 Recall that $\cD\per(\bA\Md)$ denotes the full subcategory of $\cD(\bA\Md)$ consisting of two-side
 bounded complexes of projective modules (and isomorphic objects). Actually, it is equivalent to
 $\cH^b(\bA\Pro)$. If (and only if) $\gd\bA<\8$, then $\cD\per(\bA\Md)=\cD^b(\bA\Md)$.

 \begin{proof}
 The first equivalence follows from the known fact that both categories are equivalent to $\cH^-(\bA\Pro)$.
 Moreover, since all modules in $\bA\Pmd$ are of finite projective dimension,
 $\cD^b(\bW)\simeq \cH^b(\bA\Pro) \simeq\cD\per(\bA\Md)$.
 \end{proof}

 \begin{corol}\label{c14}
  Let both rings $\bA_i\ (i=1,2)$ be left noetherian and $\bW$ be finitely generated projective as
 $\bA_1$-module. Then $\cD^-(\ld\bW\el)\simeq\cD^-(\bA\md)$
 and $\cD^b(\ld\bW\el)\simeq\cD\per(\bA\md)$. In particular, if $\,\gd\bA_i<\8$ for both $i=1,2$, then
 $\cD^b(\ld\bW\el)\simeq\cD^b(\bA\md)$. 
 \end{corol}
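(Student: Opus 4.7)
The plan is to imitate the proof of Corollary~\ref{c13} in the finitely generated setting, carefully tracking that finite generation is preserved at every step. First I would observe that noetherianness of $\bA_1$ and $\bA_2$, together with finite generation of $\bW$ as a left $\bA_1$-module, implies that the triangular ring $\bA=\bA_1[\bW]\bA_2$ is itself left noetherian; hence $\bA\md$ is an abelian subcategory of $\bA\Md$ closed under submodules and quotients.

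The key technical point is that, under the hypothesis that $\bW$ is finitely generated projective as a left $\bA_1$-module, the standard resolution \eqref{e11} of any finitely generated $M=(P_1,\phi,P_2)\in\bA\pmd$ lives in $\bA\pro$, because $\bW\*_{\bA_2}P_2$ is then a finitely generated projective $\bA_1$-module whenever $P_2\in\bA_2\pro$. In particular $\bA\pmd$ has enough finitely generated projectives (namely $\bA\pro$), and Proposition~\ref{p12}(1) shows they resolve any object of $\bA\pmd$ in length at most one. The finitely generated analogue of Proposition~\ref{p12}(2) then follows: given $N\in\bA\md$, a truncated resolution $P(n-1)\to\dots\to P(0)\to N\to0$ by finitely generated projectives (which exists by the noetherianness of $\bA$) has kernel $\Ker\al\in\bA\pmd\cap\bA\md$, to which the standard resolution applies, yielding a resolution of length at most $n+1$ in $\bA\pro$.

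With these ingredients the proof of Corollary~\ref{c13} transfers verbatim with $\Pro$ replaced by $\pro$ and $\Md$ by $\md$: both $\cD^-(\ld\bW\el)$ and $\cD^-(\bA\md)$ are equivalent to $\cH^-(\bA\pro)$ via the finitely generated projective resolutions just described. For the bounded version, since $\pd M\le1$ in $\bA\pmd$, every bounded complex over $\bW\el$ is quasi-isomorphic to a bounded complex of finitely generated projectives, yielding $\cD^b(\ld\bW\el)\simeq\cH^b(\bA\pro)\simeq\cD\per(\bA\md)$. When $\gd\bA_i<\8$ for both $i$, Proposition~\ref{p12}(2) gives $\gd\bA<\8$ and hence $\cD\per(\bA\md)=\cD^b(\bA\md)$. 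The only real obstacle I expect is the verification that the standard resolution stays in $\bA\pro$ rather than merely in $\bA\Pro$; this is precisely where the assumption of \emph{finite generation} of $\bW$ as a projective left $\bA_1$-module enters.
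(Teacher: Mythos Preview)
Your proposal is correct and follows essentially the same approach as the paper: the paper's proof simply observes that $\bA$ is left noetherian, that $\bA\pro\sbe\bA\pmd\sbe\bA\md$ and $\bA\pmd\simeq\ld\bW\el$, and then says the arguments of Corollary~\ref{c13} apply verbatim. You have supplied exactly the details that the paper leaves implicit, in particular the point that the standard resolution \eqref{e11} remains in $\bA\pro$ because $\bW\*_{\bA_2}P_2$ is finitely generated projective over $\bA_1$ when $P_2\in\bA_2\pro$.
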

 \begin{proof}
 Under these conditions the ring $\bA$ is also left noetherian, $\bA\pro\sbe\bA\pmd\sbe\bA\md$
 and $\bA\pmd\simeq\ld\bW\el$. Therefore, the arguments of the preceding proof can be applied.
 \end{proof}

 Note that the dual to the category $\bW\el$ is equivalent to the category $\bW\op\el$, where $\bW\op$
 denotes $\bW$ considered as $\bA\op$-bimodule. Namely, if $P\in\bA\pro$, set $\hat P=
 \Hom_\bA(P,\bA)\in\bA\op\pro$. Then 
 \[
 \bW(P)=\Hom_\bA(P,\bW\*_\bA P)\simeq \hat P\*_\bA \bW\*_\bA P
 \simeq \Hom_{\bA\op}(\hat P, \bW\op\*_{\bA\op}\hat P),
 \]
  and if we denote by $w\op$ the image of $w$ under this natural isomorphism, then
 $\Hom_\bW(w,v)\simeq \Hom_{\bW\op}(v\op,w\op)$. Obviously, this duality
 maps inflations to deflations and vice versa, so it is compatible with the exact structure.

 \section{Deformations in bimodule categories}
 \label{s2}

 From now on all rings are assumed to be finite dimesional algebras over an algebraically closed field $\Mk$;
 all bimodules $\bW$ are \emph{$\Mk$-central}, i.e. such that $\la w=w\la$ for every
 element $w\in\bW$ and every scalar $\la\in\Mk$, and finite dimensional over $\Mk$. In this case
 all vector spaces $\Hom_\bW(w,w')$, where $w,w'\in\bW\el$, are also finite dimensional. It
 implies that the Krull--Schmidt theorem on unique decomposition holds in this category. For every
 $P\!\in\bA\pro$, $\,\bW(P)$ is an affine space over $\Mk$. The algebraic group $\gl(P,\bA)$ acts
 regularly on $\bW(P)$: $g\cdot w=(1\*g)wg^{-1}$, and its orbits coincide with isomorphism classes of
 elements from $\bW(P)$ as objects of $\bW\el$. Therefore, one can speak about \emph{deformations}
 and \emph{degenerations} of such elements. Namely, we say that $w'$ is a \emph{degeneration} of $w$
 if $w'\in\ol{\gl(P,\bA)w}$ (the orbit closure in Zariski topology); in this case we write $w\pe w'$. It is useful to
 consider $\bW(P)$ as a $\Mk$-scheme; the set $\bW(P)(R)$ of its points in a $\Mk$-algebra $R$
 is
 \[
 (\bW\*R)(P\*R)\simeq \Hom_{\bA\*R}(P\*R,\bW\*_{\bA}(P\*R)) 
 \simeq \bW(P)\*R,
 \]
 where $\*$ denotes $\*_\Mk$. If $R\sbe R'$, there is a natural embedding
 $\bW(P)(R)\to\bW(P)(R')$ and we identify the elements of $\bW(P)(R)$ with their
 images in $\bW(P)(R')$. The following result is an analogue of  the well-known
 Riedmann--Zwara theorem on degenerations for finite dimensional modules \cite{rie,zw}.
 
 \begin{theorem}\label{t21}
 Let $w,w'\in\bW(P)$. The following conditions are equivalent:
 \begin{enumerate}
 \item  $w\pe w'$.
 \item  There is an object $v\in \bW\el$ and a conflation $w'\to w\+v\to v$.
 \item  There is an object $v\in \bW\el$ and a conflation $v\to w\+v\to w'$.
 \end{enumerate}
 \end{theorem}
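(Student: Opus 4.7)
The plan is to adapt the Riedtmann--Zwara theorem \cite{rie,zw} from module categories to the bimodule category $\bW\el$. The essential properties used are: $\bW\el$ is Krull--Schmidt with finite-dimensional Hom-spaces (automatic since everything is finite-dimensional over the algebraically closed field $\Mk$), carries the exact structure of Section~1, and each $\bW(P)$ is an affine $\Mk$-scheme on which $\gl(P,\bA)$ acts regularly with orbits equal to isomorphism classes. First I reduce $(2)\Leftrightarrow(3)$ to a single implication via the duality $(\bW\el)\op\simeq\bW\op\el$ from the end of Section~1. This duality exchanges inflations and deflations, so converts a type-(2) conflation for $(w,w')$ in $\bW\el$ into a type-(3) conflation for $(w\op,(w')\op)$ in $\bW\op\el$; being induced by a $\Mk$-scheme isomorphism $\bW(P)\cong\bW\op(\hat P)$ intertwining the respective $\gl$-actions, it also preserves the preorder $\pe$. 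Hence it suffices to prove $(1)\Leftrightarrow(2)$, and $(1)\Leftrightarrow(3)$ then follows by applying this to $\bW\op$.

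For the easy direction $(2)\Rightarrow(1)$, I mimic the Bongartz--Riedtmann construction. Given the conflation $w'\xarr\al w\+v\xarr\be v$, the underlying sequence $0\to P\xarr\al P\+ Q\xarr\be Q\to 0$ splits, and the morphism relations $(\1\*\al)w'=(w\+v)\al$ and $(\1\*\be)(w\+v)=v\be$ express how $w,v,w'$ are compatible with this splitting. Using these, one builds a regular one-parameter family $w_\la\in\bW(P\+ Q)$, $\la\in\mA^1$, that is isomorphic to $w\+v$ for $\la\ne 0$ and specialises to $w'\+v$ at $\la=0$, yielding $w\+v\pe w'\+v$ in $\bW(P\+ Q)$. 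A cancellation argument --- Krull--Schmidt in $\bW\el$ together with the fact that multiplicities of indecomposable summands are preserved along a degeneration --- then strips off $v$ to give $w\pe w'$ in $\bW(P)$.

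The hard direction $(1)\Rightarrow(2)$ is where Zwara's original argument must be genuinely adapted. A degeneration $w\pe w'$ produces a regular morphism $\gamma\colon(C,c_0)\to\bW(P)$ from a smooth pointed curve whose generic image lies in the orbit of $w$ and with $\gamma(c_0)=w'$. Localising at $c_0$ yields a discrete valuation ring $(R,\gM,t)$ with residue field $\Mk$ and fraction field $K$, together with an element $\bar w\in(\bW\*R)(P\*R)$ satisfying $\bar w\bmod\gM=w'$ and $\bar w\*_R K\cong w\*K$. Multiplication by $t$ then yields a short exact sequence $0\to\bar w\xarr{t}\bar w\to w'\to 0$ over $R$ which, viewed over $\Mk$, is an infinite-dimensional candidate conflation. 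The main obstacle, exactly as in \cite{zw}, is to cut this down to a finite conflation $v\to w\+ v\to w'$ in $\bW\el$: the strategy is to decompose $\bar w$ into indecomposables over $R$, compare the Krull--Schmidt decompositions of its reduction modulo $\gM$ (which is $w'$) and of its generic fibre (which is $w\*K$), and distil the ``extra'' summands into a single finite-dimensional object $v\in\bW\el$. The Krull--Schmidt property and finite-dimensionality of Hom-spaces in $\bW\el$ are the critical ingredients that let Zwara's counting arguments transfer to this bimodule setting, and carrying out this finite extraction carefully is the principal technical challenge of the proof.
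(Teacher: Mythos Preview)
Your reduction via the duality $(\bW\el)\op\simeq\bW\op\el$ is exactly what the paper does, and your sketch of $(1)\Rightarrow(2)$, though vague, points in the right direction. The paper makes this concrete not by decomposing $\bar w$ over $R$ but by passing to the finite truncations $W_m\in\bW(P)(R/\gM^m)\simeq\bW(mP)$: the natural surjection $R_{m+1}\to R_m$ yields a conflation $w'\simeq W_1\to W_{m+1}\to W_m$, and the real work is to show $W_{m+1}\simeq w\oplus W_m$ for $m$ large. This is achieved by producing, from the generic isomorphism $w\simeq W$ over $K$, morphisms $\phi:w\to W$ and $\theta:W\to w$ over $R$ with $\theta\phi=t^{k+r}\1$, and then splitting off a copy of $w$ from $W$ inside $(\bW\!\otimes\!R)\El$ with finite-dimensional complement. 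Your ``distil the extra summands'' is gesturing at this, but the mechanism is the truncation tower rather than a Krull--Schmidt decomposition of $\bar w$ over $R$.

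The genuine gap is in your $(2)\Rightarrow(1)$. You build a one-parameter family in $\bW(P\oplus Q)$ exhibiting $w\oplus v\pe w'\oplus v$, and then invoke cancellation: ``multiplicities of indecomposable summands are preserved along a degeneration''. That assertion is false as stated (any non-rigid object degenerates to something with different summands), and the honest cancellation statement $w\oplus v\pe w'\oplus v\Rightarrow w\pe w'$ is itself a nontrivial theorem whose standard proof \emph{uses} the Riedtmann--Zwara characterisation you are trying to establish; relying on it here is circular. The paper avoids this entirely: instead of deforming inside $\bW(P\oplus Q)$, it deforms the deflation $\be=(\be_1,\be_2)$ to $\be_\la=(\be_1,\be_2-\la\1_Q)$ and takes \emph{kernels}, obtaining a regular family $w_\la\in\bW(P)$ directly, with $w_0\simeq w'$ and $w_\la\simeq w$ whenever $\be_2-\la\1_Q$ is invertible. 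This is Riedtmann's original trick and needs no cancellation. You should replace your $(2)\Rightarrow(1)$ argument with this direct kernel construction.
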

 \begin{proof}
 Since $(\bW\el)\op\simeq\bW\op\el$, and this duality is compatible with the exact
 structures, we only need to prove that (1) and (2) are equivalent.
 We follow the original proofs of Riedtmann \cite{rie} and Zwara \cite{zw}.

 \smallskip
 (1)$\Arr$(2). \ Let $w\pe w'$.
 Then there is a discrete valuation algebra $R$ with the maximal ideal $\gM=tR$,
 the residue field $R/\gM\simeq\Mk$, the field of fractions $K$, and an element
 $W\in\bW(P)(R)$ such that
  \begin{align*}
  W&\simeq w\ \text{ in } \bW(P)(K),\\
 W\hskip-2ex\mod\gM&\simeq w'\ \text{ in } \bW(P)\simeq \bW(R)/\gM\bW(R).
 \end{align*}
 (cf. the proof of \cite[Theorem~1.2]{gh}). Choose an isomorphism $\ti\phi:w\to W$
 in $\bW(P)(K)$, i.e. an automorphism $\ti\phi:P\*K\to P\*K$ such that
 $(1\*\ti\phi)w=W\ti\phi$. Then $\ti\phi=t^{-r_1}\phi$ and
 $\ti\phi^{-1}=t^{-r_2}\phi'$ for some endomorphisms
 $\phi,\phi':P\*R\to P\*R$ and some integers $r_1,r_2$. Thus
 $\phi\phi'=\phi'\phi=t^r\1$ for $r=r_1+r_2$. Obviously, $\phi\in\Hom_\bW(w,W)$
 and $\phi'\in\Hom_\bW(W,w)$. Let $R_m=R/\gM^m$.
 Denote by $W_m$ and $w_m$ repectively the images of $W$ and $w$ in
 $\bW(P)(R_m)$. Since $R_m$ is an $m$-dimensional vector space,
 $P\*R_m\simeq mP$ as $\bA$-module, so the elements $W_m$ and $w_m$
 can be considered as objects from $\bW\el$. Moreover, $W_1\simeq w'$
 and $w_m\simeq mw$. The natural exact sequence $0\to \Mk\simeq R_1
\to R_{m+1} \to R_m\to 0$ induces a conflation $w'\simeq W_1\to W_{m+1}
 \to W_m$. Thus, we have only to show that $W_{m+1}\simeq w\+W_m$ for
 some $m$.

 There is an integer $k$ and a homomorphism of $\bA$-modules $\psi:P\*R\to P\*R$
 such that $\psi\phi(t^kx)=t^kx$ for every $x\in P\*R$ (see \cite[Lemma~3.4]{zw}), or,
 the same, $(\psi t^k)\phi=t^k\1$. Then
  \begin{align*}
  w\psi t^{k+r}&=w\psi t^k\phi\phi'=t^kw\phi'=t^k(1\*\phi')W=\\
		&=(1\*\psi t^k)(1\*\phi)(1\*\phi')W=(1\*\psi t^{k+r})W,
 \end{align*}
 so $\th=\psi t^{k+r}\in\Hom_\bW(W,w)$ and $\th\phi=t^{k+r}\1$. Let $Z$ be
 a complement of $\gM^{k+r}$ in $R$. Consider the $\bA$-homomorphism
 $q:P\*R\to P\*R$  that maps $p\*x\mps x$ for $x\in P\*\gM^{k+r}$ and
 $p\*z\mps 0$ for $z\in Z$. One immediately sees that $q\in\Hom_\bW(w,w)$
 and $q\th\phi=\1$. Therefore, $w$ is a direct summand of $W$ (in $\bW\El$).
 More precisely, $P\*R=X\+Y$, where $X=\im\phi,\ Y=\Ker q\th$, and
 with respect to this decomposition $W=\mtr{x&0\\0&y}$
 for some $y\in\bW(Y)$, where $x\simeq w$ in $(\bW\*R)\el$. Note that,
 since $\phi$ is a monomorphism of free $R$-modules of finite rank,
 $\dim_\Mk Y<\8$, so $y\in\bW\el$, and
 there is an integer $m$ such that $P\*\gM^m\sbe X$, so $P\*\gM^m\cap Y=0$.
 Therefore, $W_m\simeq x_m\+y$ and $W_{m+1}\simeq x_{m+1}\+y$. Since
 $x_{m+1}\simeq w_{m+1} \simeq w\+ w_m\simeq w\+x_m$, we get
 that $W_{m+1}\simeq w\+W_m$.

 \smallskip
 (2)$\Arr$(1). \ Suppose now that there is a conflation $w'\xarr\al w\+v\xarr\be v$,
 where  $w,w'\in\bW(P),\ v\in\bW(Q)$. Then the sequence 
 $  0\to P\xarr\al P\+Q \xarr\be Q\to 0 $
 is exact and $w\al=(\al\*1)w',\ v\be=(1\*\be)w$. If $\al=\mtr{\al_1\\\al_2},\,
 \be=(\be_1,\, \be_2)$, it means that $\al_1\in\Hom_\bW(w',w)$, 
 $\al_2\in\Hom_\bW(w',v)$, $\be_1\in\Hom_\bW(w,v)$ and $\be_2\in\Hom_\bW(v,v)$.
 Then $\be_\la=(\be_1,\, \be_2-\la\1_Q)$, where $\la\in\Mk$, also belongs to
 $\Hom_\bW(w\+v,v)$. Let $U=\setsuch{\la\in\Mk}{\be_\la \text{ is surjective}}$.
 It is an open neighbourhood of $0$ in $\Mk$. Moreover, there is an open neighbourhood
 $V\sbe U$ of $0$ and regular maps $V\to\Hom_\bA(P,P\+Q),\, \la\mps\al_\la$,
 and $\al':V\to\Hom_\bA(P\+Q,P),\, \la\mps \al'_\la$, such that
 $\al_\la=\Ker\be_\la$ and $\al'_\la\al_\la=\1_P$ for all $\la\in V$. Then, for
 $w_\la=(1\*\al'_\la)(w\+v)\al_\la$, the sequence $w_\la\xarr{\al_\la} w\+v\xarr{\be_\la} v$
 is a conflation. If $\be_2-\la\1_Q$ is invertible, it implies that $w_\la\simeq w$, and
 almost all $\la\in V$ satisfy this condition. On the other hand, $w_0\simeq w'$.
 Thus $w\pe w'$.
 \end{proof}

 \begin{corol}\label{c22}
 \begin{enumerate}
 \item   If $w\pe w'$, $\,\dim\Hom_\bW(w,z)\le\dim\Hom_\bW(w',z)$
 and $\dim\Hom_\bW(z,w)\le\dim\Hom_\bW(z,w')$ for all $z\in\bW\el$.
  \item  If $w\pe w'$ and either $\dim\Hom_\bW(w,w')=\dim\Hom_\bW(w',w')$
 or $\dim\Hom_\bW(z,w)=\dim\Hom_\bW(z,w')$ for all $z$, then
 $w\simeq w'$. Especially, if $\dim\Hom_\bW(w',w')=\dim\Hom_\bW(w,w)$,
 then $w\simeq w'$.
 \item   If there is a non-split conflation $\xi:\, w'\xarr\al w\xarr\be w''$, then
 $w\pe w'\+w''$ and $\dim\End_\bW(w)<\dim\End_\bW(w'\+w'')$.
 \end{enumerate}
 \end{corol}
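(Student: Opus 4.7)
The plan is to deduce all three parts from Theorem~\ref{t21} using left-exactness of $\Hom$ in the exact category $\bW\el$ together with the Krull--Schmidt property, which holds because all $\Hom$-spaces are finite dimensional over $\Mk$.

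For (1), apply the left-exact functor $\Hom_\bW(-,z)$ to the conflation $w'\to w\+v\to v$ supplied by Theorem~\ref{t21}(2); this gives
\[
 0\to\Hom_\bW(v,z)\to\Hom_\bW(w\+v,z)\to\Hom_\bW(w',z),
\]
and additivity of $\Hom$ over the direct sum yields $\dim\Hom_\bW(w,z)\le\dim\Hom_\bW(w',z)$. The second inequality follows symmetrically from the conflation $v\to w\+v\to w'$ of Theorem~\ref{t21}(3) via $\Hom_\bW(z,-)$.

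For (2), assume $\dim\Hom_\bW(w,w')=\dim\End_\bW(w')$ and apply $\Hom_\bW(-,w')$ to the conflation $w'\xarr{\iota}w\+v\to v$; the image of the right-hand map has dimension $\dim\Hom_\bW(w\+v,w')-\dim\Hom_\bW(v,w')=\dim\Hom_\bW(w,w')=\dim\End_\bW(w')$, so $\1_{w'}$ lifts to a retraction of $\iota$. The conflation then splits, $w\+v\simeq w'\+v$, and cancellation in Krull--Schmidt gives $w\simeq w'$. The hypothesis $\dim\Hom_\bW(z,w)=\dim\Hom_\bW(z,w')$ for all $z$ is handled dually, taking $z=w'$ and applying $\Hom_\bW(w',-)$ to the conflation $v\to w\+v\to w'$ of Theorem~\ref{t21}(3) to obtain a section of the deflation. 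The `especially' clause follows by chaining the inequalities of (1): $\dim\End_\bW(w)\le\dim\Hom_\bW(w,w')\le\dim\End_\bW(w')$, so equality of the extremes forces $\dim\Hom_\bW(w,w')=\dim\End_\bW(w')$.

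For (3), since $P''$ is projective the underlying sequence splits, so one identifies $P\simeq P'\+P''$ along such a splitting and writes $w=\mtr{w' & w_{12}\\ 0 & w''}$. The key step is to realise the degeneration explicitly via the regular family $w_t:=\mtr{w' & tw_{12}\\ 0 & w''}$, $t\in\Mk$: conjugation by $\mtr{\1 & 0\\ 0 & t^{-1}\1}\in\gl(P,\bA)$ carries $w$ to $w_t$ for every $t\in\Mk^\xx$, whereas $w_0=w'\+w''$; hence $w'\+w''\in\ol{\gl(P,\bA)\cdot w}$, i.e.\ $w\pe w'\+w''$, and (1) gives $\dim\End_\bW(w)\le\dim\End_\bW(w'\+w'')$. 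By (2), strict inequality is equivalent to $w\not\simeq w'\+w''$; the subtle point is here, and it is dispatched by applying $\Hom_\bW(w'',-)$ to $\xi$: an isomorphism $w\simeq w'\+w''$ would force the image of $\Hom_\bW(w'',w)\to\End_\bW(w'')$ to have dimension $\dim\Hom_\bW(w'',w)-\dim\Hom_\bW(w'',w')=\dim\End_\bW(w'')$, so $\1_{w''}$ would lift to a section of $\be$, contradicting the non-splitness of $\xi$.
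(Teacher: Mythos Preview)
Your proof is correct. Parts (1) and (2) follow the same line as the paper: left-exactness of $\Hom$ applied to a conflation from Theorem~\ref{t21}, then a dimension count forcing a retraction/section and Krull--Schmidt cancellation. (The paper extracts both inequalities in (1) from the single conflation $w'\to w\+v\to v$, and for the second hypothesis in (2) sets $z=v$ rather than $z=w'$, but these are cosmetic differences.)

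Part (3) is where you take a genuinely different route. The paper builds the auxiliary conflation
\[
 \xi':\ w'\+w''\xarr{\al\+\1_{w''}} w\+w''\xarr{(\be\ 0)} w'',
\]
which is exactly of the form appearing in Theorem~\ref{t21}(2) with $v=w''$, so $w\pe w'\+w''$ follows immediately; then equality of endomorphism dimensions would, by the argument of (2), split $\xi'$, and a retraction of $\al\+\1_{w''}$ restricts to a retraction of $\al$, splitting $\xi$. Your approach instead writes $w$ in upper-triangular matrix form and exhibits the degeneration by the explicit scaling family $w_t$, then shows directly that $w\simeq w'\+w''$ would make $\be_*:\Hom_\bW(w'',w)\to\End_\bW(w'')$ surjective and hence split $\xi$. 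Your argument is more elementary in that it bypasses the $(2)\Rightarrow(1)$ direction of Theorem~\ref{t21} for the degeneration step; the paper's argument is slightly slicker in that it stays entirely within the categorical framework and reuses the mechanism of (2) verbatim. Both are standard and equally valid.
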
 
 \begin{proof}
 (1) \ A conflation $w'\xarr\al w\+v\xarr\be v$ induces exact sequences
 \begin{align}\label{e21}
  0\to \Hom_\bW(v,z) \xarr{\be^*} \Hom_\bW(w,z)\+\Hom_\bW(v,z)
	 \xarr{\al^*} \Hom_\bW(w',z)\\
 \intertext{and}
  0\to \Hom_\bW(z,w') \xarr{\al_*} \Hom_\bW(z,w)\+\Hom_\bW(z,v)
	 \xarr{\be_*} \Hom_\bW(z,v),
 \tag{2.1$'$}  
 \end{align}
 which implies the necessary inequalities.

 \smallskip
 (2) \ If the first equality holds, then the map $\al^*$ in the exact sequence \eqref{e21}
 for $z=w'$ is surjective, hence, there is a morphism $\al':w\+v\to w'$ such
 that $\al'\al=\1$, so $w\+v\simeq w'\+v$ and $w\simeq w'$. We get the same
 if the second equality holds for $z=v$. At last, if $w\not\simeq w'$, then
 $\dim\Hom_\bW(w,w)\le\dim\Hom_\bW(w,w')<\dim\Hom_\bW(w',w')$.

 \smallskip
 (3) \  In this case there is a conflation $\xi':\, w'\+w''\xarr{\al\+\1_{w''}} w\+w''
 \xarr{(\be \ 0)} w''$, so $w\pe w'\+w''$. If $\dim\End_\bW(w)=
 \dim\End_\bW(w'\+w'')$, then, as we have seen in the proof of (2), the conflation $\xi'$
 splits. Then $\xi$ splits as well.
 \end{proof}

 Suppose now that $\bW$ is an $\bA_1\df\bA_2$-bimodule, $\bA=\bA_1[\bW]\bA_2$
 and consider the equivalence $\ld\bW\el\simeq \bA\pmd$, $w\mps M(w)$.
 We denote by $\rpp(P_1,P_2)$ the set of all $\bA$-modules $M$ such that $e_iM
 \simeq P_i\ (i=1,2)$ or, the same, $M\simeq M(w)$ for $w\in\ld\bW(P_2,P_1)$.
 Let $\lst Ss$ be the set of isomorphism classes of simple $\bA$-modules.
 Recall that the \vd\ $\Dim M$ of an $\bA$-module $M$ is defined as the
 integer vector $\row ds$, where $d_k=(M:S_k)$, the multiplicity of $S_k$ in a
 Jordan--H\"older series of $M$. We denote by $\rep(\fD)$ (or $\rep(\fD,\bA)$, if
 necessary), the variety of $\bA$-modules of vector dimension $\fD$. Then
 $\rpp(P_1,P_2)$ is a subvariety of $\rep(\fD)$ with $\fD=(\fD_1,\fD_2)$,
 where $\fD_i=\Dim P_i$. Obviously, $w\pe w'$ \iff $M(w)\pe M(w')$.
 
 \begin{prop}\label{p23}
 $\rpp(P_1,P_2)$ is open in $\rep(\fD)$ and its closure $\rep(P_1,P_2)$ is an
 irreducible component of $\rep(\fD)$.
 \end{prop}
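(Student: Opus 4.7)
The plan is to exhibit $\rpp(P_1,P_2)$ as the preimage of an open set under a natural projection, and then to witness it as the image of an irreducible variety; its closure is then automatically an irreducible component of $\rep(\fD)$.

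For an $\bA$-module $M\in\rep(\fD,\bA)$, the subspace $e_iM$ is naturally an $e_i\bA e_i\simeq\bA_i$-module of vector dimension $\fD_i$, which gives a morphism of varieties
\[
\pi:\rep(\fD,\bA)\larr\rep(\fD_1,\bA_1)\xx\rep(\fD_2,\bA_2),\quad M\mps(e_1M,e_2M).
\]
The $\gl(\fD_i)$-orbit of a module $N\in\rep(\fD_i,\bA_i)$ is open precisely when $\ext^1_{\bA_i}(N,N)=0$ (Voigt's lemma). Since $P_i$ is projective we have $\ext^1_{\bA_i}(P_i,P_i)=0$, so the set $U_i\sbe\rep(\fD_i,\bA_i)$ of modules isomorphic to $P_i$ is a single open orbit. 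By the very definition of $\rpp(P_1,P_2)$ one has $\rpp(P_1,P_2)=\pi^{-1}(U_1\xx U_2)$, which gives openness in $\rep(\fD)$.

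For irreducibility, fix representatives of $P_1$ and $P_2$ as specific module structures on the underlying graded vector spaces, and put $V=\Hom_{\bA_1}(\bW\*_{\bA_2}P_2,P_1)$, an affine space. Each $\phi\in V$ determines a module $(P_1,\phi,P_2)\in\rpp(P_1,P_2)$; conversely, any $M\in\rpp(P_1,P_2)$ is isomorphic to some such $(P_1,\phi,P_2)$ via a pair $(g_1,g_2)\in\gl(\fD_1)\xx\gl(\fD_2)$ of vector-space automorphisms. This yields a surjective morphism
\[
\mu:\gl(\fD_1)\xx\gl(\fD_2)\xx V\larr\rpp(P_1,P_2),
\]
whose source is irreducible, being a product of connected algebraic groups (each $\gl(d_k)$ is connected over the algebraically closed $\Mk$) with the affine space $V$. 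Hence $\rpp(P_1,P_2)$ is irreducible.

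Finally, a non-empty open irreducible subset $U$ of a variety with only finitely many irreducible components $C_1,\dots,C_r$ is contained in a unique $C_j$ and is open dense in it: the decomposition $U=\bigcup_i(U\cap C_i)$ into relatively closed subsets forces all but one summand to be empty. Thus $\rep(P_1,P_2)=\ol{\rpp(P_1,P_2)}=C_j$ is an irreducible component of $\rep(\fD)$. The principal point requiring care is the surjectivity of $\mu$, which amounts to tracking the $\gl$-action on triples $(M_1,\phi_M,M_2)$ carefully; the rest is a direct application of standard facts about module varieties.
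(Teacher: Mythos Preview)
Your proof is correct and follows essentially the same approach as the paper: openness via the projection $\pi$ to $\rep(\fD_1,\bA_1)\times\rep(\fD_2,\bA_2)$ together with rigidity of projective modules, and irreducibility from the affine parameterization by $\ld\bW(P_2,P_1)$. The paper states the irreducibility step more tersely, writing simply ``$\rpp(P_1,P_2)\simeq\mA^m$,'' whereas you spell out more carefully that $\rpp(P_1,P_2)$ is the image of the irreducible variety $\gl(\fD_1)\times\gl(\fD_2)\times V$ under the orbit map; this is the honest version of the same argument.
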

 \begin{proof}
 Let $M\in\rpp(P_1,P_2)$, $\pi_i:\rep(\fD,\bA)\to\rep(\fD_i,\bA_i)$ be the natural
 projections.  As projective modules are \emph{rigid} \cite{gab}, there are neighbourhoods
 $U_i\sbe\rep(\fD_i,\bA_i)$ of $P_i$ such that $N\simeq P_i$ for every $N\in U_i,
 (i=1,2)$. Then $\pi^{-1}_1(U_1)\cap\pi^{-1}_2(U_2)$ is a neighbourhood of $M$
 contained in $\rpp(P_1,P_2)$. Since $\rpp(P_1,P_2)\simeq\mA^m$, where
 $m=\dim\ld\bW(P_2,P_1)$, and hence it is irreducible, its closure in $\rep(\fD)$ is an irreducible
 component.
 \end{proof} 

 Dealing with derived equivalences, it is more convenient to consider projective resolutions
 instead of modules. A degeneration theory for complexes of projective modules has been
 developed in \cite{jsz}. In particular, an analogue of the Riedtmann--Zwara theorem for
 complexes has been proved there. Let $\lst\Pi s$ be the set of isomorphism classes of
 indecomposable projective $\bA$-modules. Then every projective $\bA$-module is
 isomoprhic to a module $P\row ps=\bop_{k=1}^s p_k\Pi_k$ for a uniquely defined integer
 vector $\row ps\in\mN^s$. Let $\fP:n\mps\fP_n$ be a function $\mZ\to\mN^s$.
 We denote by $\cP(\fP)$ (or by $\cP(\bA,\fP)$, if nesessary) the set of all complexes
 $\kP=(\kP_n,d_n)$ such that $\kP_n=P(\fP_n)$. If $\fP$ is bounded, i.e. $\fP_n=0$ for
 almost all $n$, the set $\cP(\fP)$ is an algebraic variety, a closed subvariety of
 $\prod_n\Hom_\bA(\kP_n,\kP_{n-1})$. Isomorphism classes of complexes from $\cP(\fP)$
 are orbits of the group $\gl(\fP)=\prod_n\gl(\kP_n,\bA)$. Thus, the relation $\pe$ is
 well defined for complexes from $\cP(\fP)$: $\kP\pe\kP'$ if $\kP'\in\ol{\gl(\fP)\kP}$.
 The following result is a special case of \cite[Theorems~1,2]{jsz}.
 
 \begin{theorem}\label{t24}
  Let $\fP:\mZ\to\mN^s$ be bounded, $\kP,\kP'\in\cP(\fP)$. Then $\kP\pe\kP'$ \iff
 there is a complex $\kQ\in\cH^-(\bA\pro)$ and a triangle 
 \[
   \kP'\to\kP\+\kQ\to\kQ\to\kP'[1].
 \]
 If $\gd\bA<\8$, one can choose $\kQ$ bounded.
 \end{theorem}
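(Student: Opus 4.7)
The plan is to mirror the proof of Theorem~\ref{t21} in the triangulated setting, replacing the exact category $\bW\el$ by the homotopy category $\cH^-(\bA\pro)$: distinguished triangles and mapping cones play the role of conflations, and homotopy equivalence replaces isomorphism.

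For (1)$\Arr$(2) I would use the DVR lifting technique again. Pick a discrete valuation algebra $R$ with $\gM=tR$, residue field $\Mk$ and fraction field $K$, together with a complex $\bW$ of projective $\bA\*R$-modules satisfying $\bW_n=\kP_n\*R$, $\bW\*_R\Mk\simeq\kP'$ in $\cP(\fP)$, and $\bW\*_RK\simeq\kP\*K$ in $\cH^-(\bA\*K\pro)$; such a family exists by the standard curve-selection argument recalled in the proof of \cite[Theorem~1.2]{gh}. From an isomorphism $\ti\phi:\kP\*K\to\bW\*_RK$ I would extract, as in Theorem~\ref{t21}, chain maps $\phi:\kP\*R\to\bW$ and $\phi':\bW\to\kP\*R$ with $\phi\phi'=\phi'\phi=t^r\1$ in $\cH^-(\bA\*R\pro)$. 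Setting $\bW_m=\bW/\gM^m\bW$, the short exact sequences $0\to\Mk\to R_{m+1}\to R_m\to0$ yield short exact sequences of complexes $0\to\kP'\to\bW_{m+1}\to\bW_m\to0$, hence triangles $\kP'\to\bW_{m+1}\to\bW_m\to\kP'[1]$; each $\bW_m$ lies in $\cH^-(\bA\pro)$ because $\bA\*R_m$ is free of finite rank over $\bA$. It therefore suffices to produce an $m$ for which $\bW_{m+1}\simeq\kP\+\bW_m$ in $\cH^-(\bA\pro)$. Following Theorem~\ref{t21}, I would invoke a derived version of \cite[Lemma~3.4]{zw} to obtain $\psi$ and $k$ with $(\psi t^k)\phi=t^k\1$, combine with $\phi\phi'=t^r\1$ to get $\th=\psi t^{k+r}$ satisfying $\th\phi=t^{k+r}\1$, and then project onto a $\Mk$-linear complement of $\gM^{k+r}$ in $R$ to obtain $q$ with $q\th\phi=\1$. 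This should exhibit $\kP\*R$ as a summand of $\bW$ in the category of complexes over $R$, whence $\bW_{m+1}\simeq\kP\+\bW_m$ in $\cH^-(\bA\pro)$ for $m\gg0$; setting $\kQ:=\bW_m$ proves (2). When $\gd\bA<\8$ one starts from a bounded $\bW$, so $\kQ$ is bounded.

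For (2)$\Arr$(1) I would realise the given triangle by a short exact sequence of complexes $0\to\kP'\xarr\al B\xarr\be\kQ\to0$ with $B\simeq\kP\+\kQ$ in $\cH^-(\bA\pro)$, possible because all three terms lie in $\cH^-(\bA\pro)$. Writing $\be=(\be_1,\be_2)$ with respect to this homotopy decomposition and considering the one-parameter family $\be_\la=(\be_1,\be_2-\la\1_\kQ)$, termwise surjectivity holds on an open neighbourhood $V\ni0$ of $0\in\mA^1$, and $\la\mapsto\Ker\be_\la$ defines a regular morphism $V\to\cP(\fP)$. Its fibre over $\la=0$ is $\kP'$, while for all $\la$ for which $\be_2-\la\1_\kQ$ is invertible up to homotopy---an open dense condition---the fibre is homotopy equivalent to $\kP$. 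Hence $\kP\pe\kP'$.

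The main technical obstacle is the promotion of the homotopy-level retraction $q\th\phi=\1$ in the first direction to a genuine decomposition of complexes over $R$, so that reduction modulo $\gM^m$ preserves the direct sum $\kP\+\bW_m$ at the level of $\cH^-(\bA\pro)$; in the exact-category proof of Theorem~\ref{t21} the analogous step is immediate because $\phi$ is an honest split monomorphism of projectives, but here one only has a splitting up to homotopy and must argue with care. The faithful translation of this part of the Riedtmann--Zwara argument into $\cH^-(\bA\pro)$ is precisely the content of \cite{jsz}.
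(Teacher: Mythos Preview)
The paper does not give its own proof of this statement: it is introduced as ``a special case of \cite[Theorems~1,2]{jsz}'' and simply cited. So there is nothing to compare your argument against except the reference itself, and your sketch is indeed a faithful outline of the strategy of \cite{jsz}, namely transporting the Riedtmann--Zwara proof from modules to $\cH^-(\bA\pro)$. You also correctly isolate the genuine technical point in the $(1)\Rightarrow(2)$ direction: upgrading the homotopy retraction $q\th\phi=\1$ to a splitting that survives reduction modulo $\gM^m$.

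One place where your sketch is looser than it should be is the $(2)\Rightarrow(1)$ direction. You realise the triangle by a short exact sequence $0\to\kP'\to B\to\kQ\to0$ with $B\simeq\kP\+\kQ$ \emph{in the homotopy category}, and then write $\be=(\be_1,\be_2)$ ``with respect to this homotopy decomposition.'' But to define the family $\be_\la$ as honest termwise-surjective chain maps with $\Ker\be_\la\in\cP(\fP)$, you need $B\simeq\kP\+\kQ$ as complexes, not merely up to homotopy. This can be arranged: since $\kQ_n$ is projective, $B_n\simeq\kP'_n\+\kQ_n=\kP_n\+\kQ_n$, so $B$ and $\kP\+\kQ$ are homotopy-equivalent complexes with isomorphic terms, hence isomorphic as complexes by \cite[Lemma~1]{jsz} (the paper itself invokes this lemma in the proof of Corollary~\ref{c25}). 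You should make this step explicit. A second wrinkle: when $\kQ$ is genuinely unbounded, neither \cite[Lemma~1]{jsz} nor the ``open dense'' claim for invertibility of $\be_2-\la\1_\kQ$ is immediate, since infinitely many degrees are in play; the argument in \cite{jsz} handles this, but your sketch does not.
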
 

 \begin{corol}\label{c25}
  Let $\kP,\kP'$ are two complexes from $\cP(\fP)$, where $\fP$ is bounded, such that
 $\Dim\rH_k(\kP)=\Dim\rH_k(\kP')$ for all $k$.
 \begin{enumerate}
 \item  If $\kP\pe \kP'$, then $\rH_k(\kP)\pe \rH_k(\kP')$ for all $k$.
 \item  If $\bA$ is \emph{hereditary}, or, the same, Morita equivelent to a path algebra of
 a quiver \cite{dk}, and $\rH_k(\kP)\pe \rH_k(\kP')$ for all $k$, then $\kP\pe \kP'$.
 \end{enumerate}
 \end{corol}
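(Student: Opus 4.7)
The plan is to prove both implications by using Theorem~\ref{t24} as a bridge between complex-level degenerations and distinguished triangles in the derived category.

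For part~(1), I would apply Theorem~\ref{t24} to obtain a complex $\kQ\in\cH^-(\bA\pro)$ and a distinguished triangle $\kP'\to\kP\+\kQ\to\kQ\to\kP'[1]$, and then pass to the long exact sequence of homology
\[
 \cdots\to\rH_{k+1}(\kQ)\to\rH_k(\kP')\to\rH_k(\kP)\+\rH_k(\kQ)\to\rH_k(\kQ)\to\rH_{k-1}(\kP')\to\cdots.
\]
The hypothesis $\Dim\rH_k(\kP)=\Dim\rH_k(\kP')$ makes the dimension of the middle term equal to the sum of the dimensions of the outer terms at every position; a short diagram chase (if $r_k$ is the rank of the connecting map $\rH_k(\kQ)\to\rH_{k-1}(\kP')$, exactness at $\rH_{k-1}(\kQ)$ forces the image of $\rH_{k-1}(\kP\+\kQ)\to\rH_{k-1}(\kQ)$ to have dimension $\dim\rH_{k-1}(\kQ)+r_k$, so $r_k=0$) then kills every connecting map. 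The long exact sequence accordingly decomposes into short exact sequences
\[
 0\to\rH_k(\kP')\to\rH_k(\kP)\+\rH_k(\kQ)\to\rH_k(\kQ)\to 0
\]
in $\bA\md$, and the classical Riedtmann--Zwara theorem \cite{rie,zw} (the module-category analogue of Theorem~\ref{t21}) yields $\rH_k(\kP)\pe\rH_k(\kP')$ for every $k$.

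For part~(2), I would reverse the construction. Writing $M_k=\rH_k(\kP)$ and $M'_k=\rH_k(\kP')$, the module Riedtmann--Zwara theorem applied to $M_k\pe M'_k$ supplies modules $V_k$ together with short exact sequences $0\to M'_k\to M_k\+V_k\to V_k\to 0$, and hence distinguished triangles $M'_k\to M_k\+V_k\to V_k\to M'_k[1]$ in $\cD^b(\bA\md)$. Shifting the $k$th such triangle by $k$ and taking the direct sum over the finitely many nonzero indices gives
\[
 \bop_k M'_k[k]\to\bop_k(M_k\+V_k)[k]\to\bop_k V_k[k]\to\bop_k M'_k[k+1].
\]
Here the hereditary hypothesis enters decisively: since $\ext^i_\bA=0$ for $i\ge 2$, every bounded complex of $\bA$-modules is \emph{formal} in $\cD^b(\bA\md)$, i.e.\ isomorphic to the direct sum of its shifted homologies. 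Consequently $\kP\simeq\bop_k M_k[k]$ and $\kP'\simeq\bop_k M'_k[k]$, and identifying the outer terms rewrites the above triangle as
\[
 \kP'\to\kP\+\kQ\to\kQ\to\kP'[1],
\]
with $\kQ=\bop_k V_k[k]$ (represented in $\cH^b(\bA\pro)$ by any projective resolution). Theorem~\ref{t24} then delivers $\kP\pe\kP'$.

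The one subtle ingredient is the formality property used in part~(2)---the isomorphism $\kP\simeq\bop_k\rH_k(\kP)[k]$ in $\cD^b(\bA\md)$ for hereditary $\bA$. This is a classical consequence of the vanishing of $\ext^2_\bA$ (the truncation triangles of any bounded complex split), and it is precisely what allows a homology-level degeneration to be promoted to a triangle of the shape demanded by Theorem~\ref{t24}. Without it the implication in~(2) genuinely fails: for general $\bA$ two complexes can carry isomorphic homology and yet lie in entirely disjoint orbit closures in $\cP(\fP)$, so the hereditary hypothesis really cannot be dropped there.
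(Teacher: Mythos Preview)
Your proposal is correct and follows essentially the same route as the paper: both parts hinge on Theorem~\ref{t24}, the module-level Riedtmann--Zwara theorem, and (for part~(2)) the formality of bounded complexes over a hereditary algebra. Your dimension count in part~(1) is in fact a slightly slicker, non-inductive version of the paper's step-by-step argument (the paper starts from the bottom degree and propagates injectivity/surjectivity upward), and your parenthetical should really say ``the image lies in $\rH_{k-1}(\kQ)$'' rather than ``exactness at $\rH_{k-1}(\kQ)$,'' but the computation is right.

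The only notable difference is in the packaging of part~(2). The paper works \emph{at the complex level}: it splits $\kP\simeq\bop_k\kP^{(k)}[k]$ into two-term summands $0\to\im d_{k+1}\to\Ker d_k\to 0$, lifts each module exact sequence to an exact sequence of two-term complexes, and checks that the middle complex is genuinely isomorphic (not just quasi-isomorphic) to $\kP^{(k)}\+\kQ$ via a dimension comparison and \cite[Lemma~1]{jsz}; it then concludes $\kP^{(k)}\pe{\kP'}^{(k)}$ summand by summand. You instead stay in $\cD^b(\bA\md)$, take the direct sum of the shifted triangles at once, and invoke Theorem~\ref{t24} a single time after identifying the outer terms with $\kP,\kP'$ via formality. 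Your route is cleaner and avoids the explicit bookkeeping, while the paper's route makes the degeneration visible on the nose in each $\cP(\fP^{(k)})$; both are valid and rest on the same ingredients.
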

 \begin{proof}
 (1) \ We may suppose that $\fP_k=0$ for $k<0$. Consider a triangle
 $\kP'\xarr\al\kP\+\kQ\xarr\be\kQ$. It induces an exact sequence of homologies
 \begin{align*}
  \cdots& \to \rH_2(\kP') \xarr{\rH_2(\al)} \rH_2(\kP)\+\rH_2(\kQ)
			 \xarr{\rH_2(\be)} \rH_2(\kQ) \to \\
   &\to \rH_1(\kP') \xarr{\rH_1(\al)} \rH_1(\kP)\+\rH_1(\kQ)
			 \xarr{\rH_1(\be)} \rH_1(\kQ) \to \\
   &\to \rH_0(\kP') \xarr{\rH_0(\al)} \rH_0(\kP)\+\rH_0(\kQ)
			 \xarr{\rH_0(\be)} \rH_0(\kQ) \to 0
 \end{align*}
 Since $\Dim\rH_0(\kP)=\Dim\rH_0(\kP')$, the map $\rH_0(\al)$ is injective, hence,
 $\rH_1(\be)$ is surjective. Now, since $\Dim\rH_1(\kP)=\Dim\rH_1(\kP')$, the map
 $\rH_1(\al)$ is injective, hence, $\rH_0(\be)$ is surjective, etc. Thus we get exact
 sequences
 \[
 0\to\rH_k(\kP')\to\rH_k(\kP)\+\rH_k(\kQ)\to\rH_k(\kQ)\to0  
 \]
  for all $k$, whence the statement follows.

\smallskip
 (2) \ Note that for projective modules $\kP,\kP'$ over a hereditary algebra $\dim\kP=\dim\kP'$
 \iff $\kP\simeq\kP'$. Recall also that every bounded complex $\kP$ of projective modules over
 a hereditary algebra is isomorphic to the direct sum $\bop_k\kP^{(k)}[k]$, where $\kP^{(k)}$ is the complex
 \[
 0\to\im d_{k+1}\xarr{\bar d_{k+1}}\Ker d_k\to 0\ \text{ ($\Ker d_k$ at the $0$-th place),}  
 \]
  where $\bar d_{k+1}$ is injective and $\rH_0(\kP^{(k)})\simeq\rH_k(\kP)$. If $\rH_k(\kP)\pe\rH_k(\kP')$,
 there is an exact sequence
 \[
 0\to\rH_k(\kP')\to\rH_k(\kP)\+N\to N\to 0,  
 \]
  which induces an exact sequenece of complexes
 \[
 0\to {\kP'}^{(k)}\to \ti\kP \to \kQ\to 0,  
 \]
  where $\kQ$ is a projective resolution of $N$, $\ti\kP_i={\kP'}^{(k)}_i\+\kQ_i$
 and $\ti\kP$ is a projective resolution of
 $\rH_k(\kP)\+N$. Therefore, $\ti\kP$ is homotopy equivalent to $\kP^{(k)}\+\kQ$ and, since
 all their components are of the same dimensions, they are isomorphic (see \cite[Lemma~1]{jsz}).
 Thus ${\kP'}^{(k)}\pe\kP^{(k)}$ for each $k$, which evidently implies that $\kP'\pe\kP$.
 \end{proof}

 We denote by $\Hom_{\cD\bA}(\kC,\kC')$ the set of morphisms $\kC\to\kC'$ in the
 derived category $\cD(\bA\Md)$. Quite analogously to Corollary~\ref{c22}, one proves

 \begin{corol}\label{c26}
 \begin{enumerate}
 \item   If $\,\kP\pe \kP'$\!, 
 $\,\dim\Hom_{\cD\bA}(\kP,\kQ)\le\dim\Hom_{\cD\bA}(\kP',\kQ)$
 and  $\dim\Hom_{\cD\bA}(\kQ,\kP)\le\dim\Hom_{\cD\bA}(\kQ,\kP')$ for all
 $\kQ\in\cD(\bA\md)$.
  \item  If $\kP\pe \kP'$ and either
 $\dim\Hom_{\cD\bA}(\kP,\kP')=\dim\Hom_{\cD\bA}(\kP',\kP')$ or
 $\dim\Hom_{\cD\bA}(\kQ,\kP)=\dim\Hom_{\cD\bA}(\kQ,\kP')$
 for all $\kQ$, then $\kP\simeq \kP'$.
 \item   If there is a non-split triangle $\kP'\to\kP\to\kP''\to\kP[1]$, then
 $\kP\pe\kP'\+\kP''$ and
 $\dim\End_{\cD\bA}(\kP)<\dim\End_{\cD\bA}(\kP'\+\kP'')$.
 \end{enumerate}
 \end{corol}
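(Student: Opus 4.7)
The plan is to mirror the proof of Corollary~\ref{c22}, substituting Theorem~\ref{t24} for Theorem~\ref{t21}: a degeneration $\kP\pe\kP'$ is witnessed by a distinguished triangle
\[
\kP'\xarr\al\kP\+\kR\xarr\be\kR\to\kP'[1],
\]
and the cohomological long exact sequences obtained from $\Hom_{\cD\bA}(-,\kQ)$ and $\Hom_{\cD\bA}(\kQ,-)$ take the place of the short exact sequences of $\Hom$-groups~\eqref{e21} used in the module-theoretic proof.

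For~(1), applying the contravariant functor $\Hom_{\cD\bA}(-,\kQ)$ to this triangle yields the exact segment
\[
\Hom_{\cD\bA}(\kR,\kQ)\xarr{\be^*}\Hom_{\cD\bA}(\kP,\kQ)\+\Hom_{\cD\bA}(\kR,\kQ)\xarr{\al^*}\Hom_{\cD\bA}(\kP',\kQ)
\]
with $\Ker\al^*=\im\be^*$. Rank--nullity gives
\[
\dim\Hom_{\cD\bA}(\kP',\kQ)\ge\dim\im\al^*=\dim\Hom_{\cD\bA}(\kP,\kQ)+\dim\Hom_{\cD\bA}(\kR,\kQ)-\dim\im\be^*\ge\dim\Hom_{\cD\bA}(\kP,\kQ),
\]
the final inequality because $\dim\im\be^*\le\dim\Hom_{\cD\bA}(\kR,\kQ)$. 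The covariant functor $\Hom_{\cD\bA}(\kQ,-)$ produces the dual inequality by the same rank--nullity argument.

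For~(2), specialize to $\kQ=\kP'$: the hypothesis $\dim\Hom_{\cD\bA}(\kP,\kP')=\dim\Hom_{\cD\bA}(\kP',\kP')$ collapses the chain just displayed into equalities and forces $\al^*$ to be surjective; a preimage of $\1_{\kP'}$ then provides a retraction of $\al$, so the triangle splits and $\kP\+\kR\simeq\kP'\+\kR$ in $\cD(\bA\md)$. Krull--Schmidt in the ambient $\Hom$-finite Karoubian triangulated category (namely $\cD^b(\bA\md)$, or $\cD\per(\bA\md)$ when $\gd\bA=\8$) cancels $\kR$ and yields $\kP\simeq\kP'$. The variant with $\dim\Hom_{\cD\bA}(\kQ,\kP)=\dim\Hom_{\cD\bA}(\kQ,\kP')$ for all $\kQ$ is handled by taking $\kQ=\kR$ and applying $\Hom_{\cD\bA}(\kR,-)$ to the same triangle: the equality forces $\be_*$ to be surjective, so $\be$ admits a section and the triangle again splits.

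For~(3), form the direct sum of the hypothetical non-split triangle with the identity triangle on $\kP''$ to obtain the distinguished triangle
\[
\kP'\+\kP''\xarr{\al\+\1_{\kP''}}\kP\+\kP''\xarr{(\be,\,0)}\kP''\to(\kP'\+\kP'')[1],
\]
which by Theorem~\ref{t24} shows $\kP\pe\kP'\+\kP''$. Any section of $(\be,0)$ restricts in its first component to a section of the original $\be$, so a splitting of the auxiliary triangle would split the original; hence the equality $\dim\End_{\cD\bA}(\kP)=\dim\End_{\cD\bA}(\kP'\+\kP'')$ would contradict part~(2). The main subtlety throughout is the cancellation in~(2), which requires that the ambient triangulated category be $\Hom$-finite and Krull--Schmidt; for bounded complexes of finitely generated projectives over a finite dimensional $\Mk$-algebra this follows from standard facts on $\cD^b(\bA\md)$ and $\cD\per(\bA\md)$.
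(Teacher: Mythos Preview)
Your proposal is correct and is precisely the argument the paper has in mind: the paper does not write out a proof of Corollary~\ref{c26} at all, but only remarks that it is proved ``quite analogously to Corollary~\ref{c22},'' and your write-up is exactly that analogous proof, with Theorem~\ref{t24} replacing Theorem~\ref{t21} and the long exact $\Hom$ sequences of a triangle replacing the left-exact sequences~\eqref{e21}. Your explicit mention of the Krull--Schmidt property of $\cD^b(\bA\md)$ (resp.\ $\cD\per(\bA\md)$) needed for the cancellation $\kP\+\kR\simeq\kP'\+\kR\Rightarrow\kP\simeq\kP'$ is a point the paper leaves implicit, so your version is in fact slightly more careful.
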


 \section{Tits form and Euler form}
 \label{s3}

 Let $\bW$ be an $\bA$-bimodule. We define the \emph{Tits form} of $\bW$ as the
 bilinear form on the Grothendieck group $K_0(\bA\pro)$ of projective $\bA$-modules 
 \[
  \gnr{[P],[P']}_\bW=\dim\Hom_\bA(P,P')-\dim\Hom_\bA(P,\bW\*_\bA P'). 
 \]
 The corresponding quadratic form $\rQ_\bW([P])=\gnr{[P],[P]}_\bW$ is the classical
 Tits form, which equals the dimension of the group $\gl(P,\bA)$ minus the
 dimension of the affine space $\bW(P)$, where this group acts so that the orbits are
 the isomorphism classes. 

 If $\bW$ is a bipartite $\bA_2\df\bA_1$-bimodule, $P=P_1\+P_2$ and $P'=P_1'\+P'_2$,
 where $P_i,P'_i$ are $\bA_i$-modules, then
 \[
  \gnr{[P],[P']}_\bW=\dim\Hom_{\bA_1}(P_1,P'_1)+\dim\Hom_{\bA_2}(P_2,P_2')
	-\dim\bW(P_2,P'_1).
 \]
 In particular, if $\bA=\bA_1[\bW]\bA_2$ and we consider the Tits form of the dual
 bimodule $\ld\bW$, then 
 \[
  \rQ_{\ld\bW}(P_1\+P_2)=\dim\gl(P_1,\bA_1)+\dim\gl(P_2,\bA_2)-
	\dim\rep(P_1,P_2).
 \]

 If, moreover, $\bW$ is projective as left $\bA_1$-bimodule and both $\bA_i$ are of finite
 global dimension, the algebra $\bA=\bA_1[\bW]\bA_2$ is also of finite global dimension,
 so the Grothendieck group $K_0(\bA\pro)$ coincides with the Grothendieck group
 $K_0(\bA\md)$ of all $\bA$-modules. In this case the \emph{Euler form} on this group
 can be defined as
 \[
  \gnr{[M],[M']}_\bA=\sum_k (-1)^k\dim\ext^k_\bA(M,M'). 
 \]
 In particular, if $P$ and $P'$ are projective, $\gnr{[P,[P']}_\bA=\dim\Hom_\bA(P,P')$.
 
 \begin{prop}\label{p31}
 Let $\bW$ be an $\bA_1\df\bA_2$-bimodule projective as left $\bA_1$-module,
 $M\in\rpp(P_1,P_2)$, $M'\in\rpp(P'_1,P'_2)$. Then
 $\gnr{[M],[M']}_\bA=\gnr{[P],[P']}_{\ld\bW}$, where
 $P=P_1\+P_2,\,P'=P_1'\+P_2'$.
 \end{prop}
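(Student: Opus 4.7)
The plan is to compute the Euler form $\langle[M],[M']\rangle_\bA$ directly from the standard resolution \eqref{e11} of $M$ constructed in Proposition~\ref{p12}. Since $\bW$ is projective as left $\bA_1$-module, that resolution is a genuine finite projective resolution of $M$ over $\bA$, so
\[
 \gnr{[M],[M']}_\bA=\sum_k(-1)^k\dim\Hom_\bA(\kP_k,M'),
\]
where $\kP_0=(P_1,0,0)\+(\bW\*_{\bA_2}P_2,\1,P_2)$ and $\kP_1=(\bW\*_{\bA_2}P_2,0,0)$.

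Next I would identify each of the three Hom spaces concretely. Writing $M'=(P_1',\phi',P_2')$, and noting that the modules $(X,0,0)$ correspond to $\bA e_1\*_{\bA_1}X$ while $(\bW\*_{\bA_2}Y,\1,Y)$ corresponds to $\bA e_2\*_{\bA_2}Y$, one obtains
\begin{align*}
 \Hom_\bA((P_1,0,0),M')&\simeq\Hom_{\bA_1}(P_1,P'_1),\\
 \Hom_\bA((\bW\*_{\bA_2}P_2,\1,P_2),M')&\simeq\Hom_{\bA_2}(P_2,P'_2),\\
 \Hom_\bA((\bW\*_{\bA_2}P_2,0,0),M')&\simeq\Hom_{\bA_1}(\bW\*_{\bA_2}P_2,P_1').
\end{align*}
The last one, by the tensor--hom adjunction together with the natural isomorphism $\ld\bW\*_{\bA_1}P_1'\simeq\Hom_{\bA_1}(\bW,P_1')$ from Section~1, equals $\dim\ld\bW(P_2,P_1')$.

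Putting these three computations together yields
\[
 \gnr{[M],[M']}_\bA=\dim\Hom_{\bA_1}(P_1,P_1')+\dim\Hom_{\bA_2}(P_2,P_2')-\dim\ld\bW(P_2,P_1'),
\]
which is exactly $\gnr{[P],[P']}_{\ld\bW}$ by the formula for the Tits form of a bipartite bimodule given earlier in Section~3. There is no real obstacle here; the only thing to check carefully is the identification of the middle term of the resolution as a projective $\bA$-module of the form $\bA e_2\*_{\bA_2}P_2$ (so that its Hom into $M'$ reduces to $\Hom_{\bA_2}(P_2,e_2M')=\Hom_{\bA_2}(P_2,P_2')$), and that the differential in \eqref{e11} induces a surjection on $\Hom(-,M')$-level so that no higher $\ext$ terms sneak in beyond what the alternating sum already captures. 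Both are immediate from the explicit shape of the standard resolution.
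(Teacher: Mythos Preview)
Your proof is correct and follows essentially the same route as the paper's: both use the standard resolution \eqref{e11}. The paper resolves both $M$ and $M'$, writes $[M]=[\kP]-[\kQ]$ and $[M']=[\kP']-[\kQ']$ in $K_0$, and expands $\gnr{[M],[M']}_\bA$ bilinearly into four $\Hom$-terms between projectives (with one cancellation); you instead resolve only $M$ and apply $\Hom_\bA(-,M')$, arriving at the same three-term expression more directly. That is a mild streamlining, not a different idea.

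One small correction to your closing remark: the claim that the differential of \eqref{e11} ``induces a surjection on $\Hom(-,M')$-level'' is neither needed nor true in general (surjectivity there would force $\ext^1_\bA(M,M')=0$). The identity
\[
 \sum_k(-1)^k\dim\Hom_\bA(\kP_k,M')=\sum_k(-1)^k\dim\ext^k_\bA(M,M')
\]
holds for any bounded projective resolution purely because the Euler characteristic of a finite complex equals that of its cohomology; no surjectivity is required, and no higher $\ext$ terms can appear simply because the resolution has length~$1$.
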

 \begin{proof}
 Consider the standard resolutions \eqref{e11} $0\to\kQ\to\kP\to M\to0$ of $M$ and
 $0\to\kQ'\to\kP'\to M'\to0$ of $M'$. Then $[M]=[\kP]-[\kQ]$ and $[M']=[\kP']-[\kQ']$
 in $K_0(\bA\md)$. Therefore,
 \begin{align*}
  &\gnr{[M],[M']}_\bA=\gnr{[\kP],[\kP']}_\bA+\gnr{[\kQ],[\kQ']}_\bA
	-\gnr{[\kP],[\kQ']}_\bA-\gnr{[\kQ],[\kP']}_\bA = \\
	&= \dim\Hom_{\bA_1}(P_1,P_1')+\dim\Hom_{\bA_2}(P_2,P_2') +
	\dim\Hom_{\bA_1}(P_1,\bW\*_{\bA_1}P_2')+\\
	&+\dim\Hom_{\bA_1}(\bW\*_{\bA_2}P_2,\bW\*_{\bA_2}P_2')
	-  \dim\Hom_{\bA_1}(P_1,\bW\*_{\bA_2}P'_2) -\\
	&- \dim\Hom_{\bA_1}(\bW\*_{\bA_2}P_2,P'_1)
	 -\dim\Hom_{\bA_1}(\bW\*_{\bA_2}P_2,\bW\*_{\bA_2}P'_2) =\\
    	&= \dim\Hom_{\bA_1}(P_1,P_1')+\dim\Hom_{\bA_2}(P_2,P_2') -
	\dim\Hom_{\bA_1}(\bW\*_{\bA_2}P_2,P'_1) =\\
	& = \gnr{[P],[P']}_{\ld\bW}, \text{ since }
	 \Hom_{\bA_1}(\bW\*_{\bA_2}P_2,P'_1)\simeq \ld\bW(P_2,P'_1).
 \end{align*}
\vskip-\baselineskip
 \end{proof}

 \begin{corol}\label{c32}
  Under conditions of Proposition~\ref{p31}, the codimension of the orbit of $M$ in
 $\rep(P_1,P_2)$ equals $\dim\ext^1_\bA(M,M)$. In particular, this orbit is open in
 $\rep(P_1,P_2)$ \iff $\ext^1_\bA(M,M)=0$. 
 \end{corol}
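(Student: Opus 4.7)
The plan is a straightforward dimension count assembling the ingredients from Sections~1 and~3. First I compute the dimension of the orbit of $M$ in $\rep(P_1,P_2)$ under the action of $G=\gl(P_1,\bA_1)\xx\gl(P_2,\bA_2)$. Since $\rpp(P_1,P_2)\simeq\mA^m$ with $m=\dim\ld\bW(P_2,P_1)$ is open dense in the irreducible component $\rep(P_1,P_2)$, the expression for $\rQ_{\ld\bW}$ recorded just before Proposition~\ref{p31} rearranges to
\[
 \dim\rep(P_1,P_2)=\dim G-\rQ_{\ld\bW}(P),
\]
where $P=P_1\+P_2$. Under the equivalence $\ld\bW\el\simeq\bA\pmd$ of Proposition~\ref{p11}, the stabilizer of $M$ in $G$ corresponds to $\Aut_\bA(M)$, which is open in $\End_\bA(M)$, hence has dimension $\dim\End_\bA(M)$. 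So the orbit has dimension $\dim G-\dim\End_\bA(M)$, and subtracting gives
\[
 \cod_{\rep(P_1,P_2)}(\text{orbit of }M)=\dim\End_\bA(M)-\rQ_{\ld\bW}(P).
\]

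Next I rewrite $\rQ_{\ld\bW}(P)$ using Section~3. Proposition~\ref{p31} identifies
\[
 \rQ_{\ld\bW}(P)=\gnr{[P],[P]}_{\ld\bW}=\gnr{[M],[M]}_\bA,
\]
the Euler form. By Proposition~\ref{p12}(1) we have $\pd_\bA M\le1$, so $\ext^k_\bA(M,M)=0$ for $k\ge 2$ and the Euler form collapses to
\[
 \gnr{[M],[M]}_\bA=\dim\End_\bA(M)-\dim\ext^1_\bA(M,M).
\]
Substituting back makes the $\dim\End_\bA(M)$ terms cancel, leaving codimension $=\dim\ext^1_\bA(M,M)$. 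The second assertion is then immediate: since $\rep(P_1,P_2)$ is irreducible (Proposition~\ref{p23}), the orbit of $M$ is open in $\rep(P_1,P_2)$ precisely when its codimension is zero, that is, when $\ext^1_\bA(M,M)=0$.

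I do not foresee a serious obstacle: the argument is bookkeeping that combines the Tits--Euler comparison from Proposition~\ref{p31}, the projective-dimension bound from Proposition~\ref{p12}(1), and the standard stabilizer dimension count for a regular group action on an affine space. The only thing one has to be careful about is to pass the orbit-dimension calculation cleanly between $\rpp(P_1,P_2)$ (where the natural affine-space and group structures live) and its closure $\rep(P_1,P_2)$, which is legitimate because $\rpp(P_1,P_2)$ is open dense in the latter.
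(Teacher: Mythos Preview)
Your proposal is correct and follows essentially the same route as the paper: both compute the codimension as $\dim\End_\bA M-\rQ_{\ld\bW}(P)$ via the orbit--stabilizer count, then invoke Proposition~\ref{p31} and the bound $\pd M\le1$ from Proposition~\ref{p12}(1) to rewrite $\rQ_{\ld\bW}(P)=\dim\End_\bA M-\dim\ext^1_\bA(M,M)$. The only cosmetic difference is that you make the passage between $\rpp(P_1,P_2)$ and its closure explicit and cite Proposition~\ref{p23} for irreducibility when deducing the ``open iff $\ext^1=0$'' clause, which the paper leaves implicit.
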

 \begin{proof}
 This codimension equals 
  \begin{align*}
  &\dim\rep(P_1,P_2)-(\dim\gl(P_1,A_1)+\dim\gl(P_2,A_2)-\dim\Aut_\bA M)=\\
  & =\dim\ld\bW(P_1,P_2)-\dim\gl(P_1,A_1)-\dim\gl(P_2,A_2)+\dim\End_\bA M=\\
  & =-\gnr{[P],[P]}_{\ld\bW} +\dim\End_\bA(M,M)=\dim\ext^1(M,M),
 \end{align*}
 since $\pd M\le1$ and
 \[
  \gnr{[P],[P]}_{\ld\bW}=\gnr{[M],[M]}_\bA= \dim\End_\bA M-\dim\ext^1_\bA(M,M).
 \]
 \end{proof}

 Recall that analogous results hold for representations of quivers, or, the same, for modules
 over the path algebra $\bB=\Mk\De$ of a quiver $\De$ \cite{rin}. If $\De_0$ is the set of vertices
 and $\De_1$ is the set of arrows of $\De$, the \emph{Tits form} is defined as
 \[
  \gnr{[M],[M']}_\De=\sum_{i\in\De_0}d_id'_i-\sum_{a\in\De_1}d_{\si a}d'_{\tau a}, 
 \]
 where, for any representation $M$ of the quiver $\De$, $d_i=\dim M(i)$ (thus
 $\row ds=\Dim M$, the dimension vector of $M$), $\si a$ is the \emph{source} of the arrow
 $a$ and $\tau a$ is its \emph{target}, i.e. $a:\si a\to \tau a$. The corresponding quadratic form
 is again of geometric meaning: if $\fD=\dim M=\row ds$, then
 \[
  \rQ_\De(\fD)=\gnr{[M],[M]}_\De=\dim\gl(\fD,\Mk)-\dim\rep(\fD,\bB),
 \]
 where $\gl(\fD,\Mk)=\prod_i\gl(d_i,\Mk)$.
 The Tits form coincides with the \emph{Euler form}: source
  \[
   \gnr{[M],[M']}_\De=\gnr{[M],[M']}_\bB=
  \dim\Hom_\bB(M,M')-\dim\ext^1_\bB(M,M'),
 \]
 and the codimenion in $\rep(\fD,\bB)$ of the orbit of the representation $[M]$ equals
 $\dim\ext^1_\bB(M,M)$.
 
 \section{Tilting and deformations}
 \label{s4}

 Let $\bA$ and $\bB$ be derived equivalent finite dimensional $\Mk$-algebras. We suppose
 that they are of finite global dimension. Then it follows from \cite{ric} (see also \cite{kel})
 that there is a bounded complex $\kT=(\kT_n,\dd_n)$ of finite dimensional projective
 $\bB\df\bA$-bimodules such that the functor $\kT\tl_\bA\hskip-.7ex\_\,$
 is an equivalence $\cD(\bA\md)\to\cD(\bB\md)$ and $\rh_{\bB}(\kT,\_\,)$ is a
 quasi-inverse equivalence $\cD(\bB\md)\to\cD(\bA\md)$. We denote $\kT M=\kT\*_{\bA}M$,
$\kT_kM=\kT_k\*_{\bA}M$ and $\kH_kM=\rH_k(\kT M)$.
 
 \begin{lemma}\label{l41}
  Let $Z$ be a component of $\rep(\fD,\bA)$ for some vector dimension $\fD$. There
 is an open subset $Z^\kT\sbe Z$ such that for all $M\in Z^\kT$ and all $k$ the
 vector dimensions $\Dim\kH_k M$ are the same and the smallest (componentwise)
 among $\Dim\kH_kN$ for $N\in Z$. Moreover, if $M,M'\in Z^{\kT}$ and $M\pe M'$,
 then $\kH_kM\pe\kH_kM'$ for all $k$. 
 \end{lemma}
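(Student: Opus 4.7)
The plan is to use upper semi-continuity of the homology dimensions of the complex $\kT M$ as $M$ varies, then translate the degeneration $M \pe M'$ through the exact functor $\kT\otimes_\bA -$ and read off a degeneration at the level of homology.

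For the existence of $Z^\kT$, note that for each $M \in Z$ the complex $\kT M = \kT \otimes_\bA M$ has terms $\kT_n \otimes_\bA M$ of $\Mk$-dimension depending only on $\fD$, while the differentials $\dd_n \otimes 1$ depend linearly on the structure constants of $M$. Since matrix rank is lower semi-continuous, the formula
\[
 \dim e_j\rH_k(\kT M) = \dim e_j(\kT_k \otimes_\bA M) - \mathrm{rk}(e_j\dd_k \otimes 1) - \mathrm{rk}(e_j\dd_{k+1} \otimes 1)
\]
exhibits each of the finitely many non-trivial functions $\phi_{k,j}\colon M \mapsto \dim e_j\kH_k M$ as upper semi-continuous on $Z$. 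On the irreducible variety $Z$ every such function attains its minimum $m_{k,j}$ on a non-empty open subset $U_{k,j}$, and I would take $Z^\kT := \bigcap_{k,j} U_{k,j}$; this is non-empty open, and on it $\Dim \kH_k M = (m_{k,j})_j$ is constant and componentwise minimal.

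For the second claim, suppose $M, M' \in Z^\kT$ with $M \pe M'$. Following the proof of Theorem~\ref{t21}, I would choose a discrete valuation algebra $R$ with residue field $\Mk$ and fraction field $K$, and an $R$-flat $(\bA \otimes R)$-module $\mathbf{M}$ with $\mathbf{M} \otimes_R \Mk \simeq M'$ and $\mathbf{M} \otimes_R K \simeq M \otimes K$. Each $\kT_n$ is projective as right $\bA$-module (being a projective bimodule), so $\kT_n \otimes_\bA -$ is exact and preserves $R$-flatness. Hence $\kT \otimes_\bA \mathbf{M}$ is a bounded complex of $R$-free $(\bB \otimes R)$-modules whose generic fiber is $\kT M \otimes K$ and whose special fiber is $\kT M'$.

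For such a complex the universal coefficient sequence over the DVR $R$ gives
\[
 0 \to \rH_k(\kT \otimes_\bA \mathbf{M}) \otimes_R \Mk \to \kH_k M' \to \mathrm{Tor}^R_1(\rH_{k-1}(\kT \otimes_\bA \mathbf{M}), \Mk) \to 0,
\]
to be combined with $\dim \rH_k(\cdot) \otimes_R \Mk \ge \dim \rH_k(\cdot) \otimes_R K = \dim \kH_k M$. The hypothesis $M, M' \in Z^\kT$ forces $\dim \kH_k M = \dim \kH_k M'$ componentwise for every $k$, and a short dimension count then shows that every Tor term vanishes and every $\rH_k(\kT \otimes_\bA \mathbf{M})$ is $R$-flat. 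This is precisely a flat family of $\bB$-modules with generic fiber $\kH_k M$ and special fiber $\kH_k M'$, delivering $\kH_k M \pe \kH_k M'$ for all $k$. The main obstacle I anticipate is the bookkeeping in the universal coefficient sequence, which is exactly where the constancy of $\Dim \kH_k$ on $Z^\kT$ plays its essential role, converting semi-continuity into strict flatness of the homology family.
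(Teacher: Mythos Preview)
Your argument is correct. The first half (existence of $Z^\kT$) is exactly the paper's argument, stated with more detail: the paper simply takes the open locus where each $\Dim\im(\dd_k\*1_M)$ is maximal.

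For the second half the two proofs diverge. The paper stays inside the algebraic Riedtmann--Zwara framework built in Section~2: from $M\pe M'$ it extracts (via the module version of Theorem~\ref{t21}) an exact sequence $0\to M'\to M\+L\to L\to0$, applies the exact functor $\kT\*_\bA\_\,$ to obtain a triangle $\kT M'\to\kT M\+\kT L\to\kT L\to\kT M'[1]$, invokes Theorem~\ref{t24} (the Jensen--Su--Zimmermann criterion) to conclude $\kT M\pe\kT M'$ as complexes in $\cP(\fP)$, and then reads off $\kH_kM\pe\kH_kM'$ from Corollary~\ref{c25}(1) using the equality $\Dim\kH_kM=\Dim\kH_kM'$ on $Z^\kT$. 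Your route instead transports the DVR family directly: you tensor the $R$-flat family $\mathbf M$ with $\kT$, obtain a bounded complex of $R$-free $(\bB\*R)$-modules, and use the universal-coefficient sequence together with the dimension equality on $Z^\kT$ to force each $\rH_k(\kT\*_\bA\mathbf M)$ to be $R$-free, yielding the desired flat family of $\bB$-modules.

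Both arguments are sound and use the hypothesis $M,M'\in Z^\kT$ in the same essential place. The paper's version is terser because it cashes in the machinery of Section~2 (and the cited JSZ result); yours is more self-contained and avoids Theorem~\ref{t24} and Corollary~\ref{c25} entirely, at the cost of redoing the homological bookkeeping by hand. In substance your universal-coefficient count is the DVR-side twin of the long-exact-sequence count carried out in the proof of Corollary~\ref{c25}(1).
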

 \begin{proof}
 The first assertion is evident: take for $U$ the set of all $M\in Z$ such that 
 $\Dim\im(\dd_k\*1_M)$ is the biggest possible. If $M\pe M'$, Theorems~\ref{t21}
 and \ref{t24} show that $\kT M\pe\kT M'$, so the second assertion follows from
 Corollary~\ref{c25}.
 \end{proof}

 Thus, one can define a map $Z_\kT\to\rep(\fH_k,\bB)$, where $\fH_k=\Dim\kH_kM$.
 Moreover, for every $M\in Z_\kT$ there is an open neighbourhood $U\ni M$ in $Z^\kT$
 such that this map can be considsered as a regular map $U\to\rep(\fH_k,\bB)$. 

 Let also $Z_0$ be the \emph{open sheet} of $Z$, i.e. the set of all $M\in Z$ such that
 $\dim_\Mk\End_{\bA}M$ is minimal possible, or, the same, the dimension of the orbit
 of $M$ is maximal possible ($Z_0$ is always open in $Z$).
 We denote $Z^\kT_0=Z^\kT\cap Z_0$ and call $Z^\kT_0$ the \emph{$\kT$-sheet}
 of $Z$. Note that if $M'\in Z_0$ and $M\pe M'$, then $M\simeq M'$. In particular,
 if $M\simeq M_1\+M_2\in Z_0$, then $\ext^1(M_1,M_2)=0$. Therefore, the same
 is true for the object $\kT M$ from $\cD(\bB\md)$. Note that the group $\gl(\fD,\Mk)$ 
 acts on $Z_0$ and this action is closed. 
 
 We apply these considerations to the component $\rep(P_1,P_2)$ of $\rep(\bA)$,
 when $\bA=\bA_1[\bW]\bA_2$, getting the $\kT$-sheet $\rep^\kT_0(P_1,P_2)$
 and an open subset ${\rpp}^\kT_0(P_1,P_2)=\rpp(P_1,P_2)\cap\rep^\kT_0(P_1,P_2)$,
 which we call the \emph{$\kT$-sheet} of $\rpp(P_1,P_2)$. The isomorphism
 $\rpp(P_1,P_2)\simeq \ld\bW(P_2,P_1)$ maps it to an open subset
 $\ld\bW^\kT_0(P_1,P_2)$, also called the \emph{$\kT$-sheet} of $\ld\bW(P_2,P_1)$.

 Suppose now that the algebra $\bA=\bA_1[\bW]\bA_2$ is derived equivalent to a
 path algebra $\bB=\Mk\De$ of a quiver $\De$. Let $\fD$ be a dimension of representations
 of $\bB$. Then $\rep(\fD,\bB)$ is an affine space and there is an open subset
 $\rep^c(\fD)\sbe\rep(\fD,\bB)$ such that every $N\in\rep^c(\fD)$ decomposes
 as $N\simeq\bop_{i=1}^m N_i$, where the number of summands $m$ and
 $\dim N_i=\fD_i$ are common for all $N\in\rep^c(\fD)$, all modules $N_i$
 are \emph{bricks} (or \emph{schurian}), i.e. $\End_\bA N_i=\Mk$, and
 $\ext^1_\bA(N_i,N_j)=0$ for $i\ne j$ \cite[Proposition 3]{kac}. Recall also
 that every object $\kC\in\cD^b(\bB\md)$ is isomorphic to the direct sum
 of shifted modules $\bop_kH_k[k]$, where $H_k=\rH_k(\kC)$. Especially, it is the
 case for $\kT M$, where $M\in\bA\md$: it is isomorphic to $\bop_k\kH_kM[k]$.
 
  \begin{lemma}\label{l42}
 Let $\bA$ be a Dynkinian algebra, $Z_0$ be the open sheet of an irreducible component $Z$
 of the variety $\rep(\fD,\bA)$ for some dimenion $\fD$. Then $Z_0$ consists of a unique
 isomorphim class of modules, and $\ext^1_\bA(M,M)=0$ for any $M\in Z_0$.
 \end{lemma}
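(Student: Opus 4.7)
The plan is to transport the question via the derived equivalence $\kT\colon\cD^b(\bA\md)\to\cD^b(\bB\md)$, where $\bB=\Mk\De$ for a Dynkin quiver $\De$, and to exploit two features of $\bB$: it is hereditary, so every object of $\cD^b(\bB\md)$ splits as $\bop_k H_k[k]$ with $H_k\in\bB\md$; and by positive definiteness of the Tits/Euler form of $\De$, every indecomposable $\bB$-module $N$ is exceptional, i.e.\ $\End_\bB N=\Mk$ and $\ext^1_\bB(N,N)=0$. Throughout I use freely the module analogues of Theorem~\ref{t21} and Corollary~\ref{c22}, that is the classical Riedtmann--Zwara theorem \cite{rie,zw} and its standard consequences.

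First I show that $\ext^1_\bA(M,M)=0$ for $M$ in a dense open subset of $Z_0$. Take the Krull--Schmidt decomposition $M\simeq\bop_i M_i^{n_i}$. Each $\kT M_i$ is indecomposable in $\cD^b(\bB\md)$, hence isomorphic to $N_i[k_i]$ for some indecomposable $N_i\in\bB\md$, and exceptionality yields $\ext^1_\bA(M_i,M_i)\simeq\ext^1_\bB(N_i,N_i)=0$. Suppose to the contrary that $\ext^1_\bA(M_i,M_j)\ne0$ for distinct summands $M_i,M_j$ of $M$ and pick a non-split extension $0\to M_j\to E\to M_i\to0$. Then $E\pe M_i\+M_j$ with $\dim\End_\bA E<\dim\End_\bA(M_i\+M_j)$. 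Writing $M\simeq(M_i\+M_j)\+M''$ and setting $M'=E\+M''$, one has $\Dim M'=\Dim M$ and $M'\pe M$; expanding $\End$ across the direct summands and applying Corollary~\ref{c22}(1) to $E\pe M_i\+M_j$ with $z=M''$ gives $\dim\End_\bA M'<\dim\End_\bA M$. Now $\ol{\gl(\fD,\Mk)\cdot M'}$ is irreducible, so lies in a single irreducible component of $\rep(\fD,\bA)$; and this component contains $M$. Choosing $M$ in the dense open subset $Z_0\setminus\bup_{Z'\ne Z}(Z\cap Z')$ forces the component to be $Z$, whence $M'\in Z$---contradicting minimality of $\dim\End_\bA$ on $Z_0$.

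To finish, note that $Z_0$ is open in the irreducible component $Z$ and hence irreducible. Fix $M\in Z_0$ with $\ext^1_\bA(M,M)=0$ (just produced). The standard Voigt-type argument shows that this vanishing makes $\gl(\fD,\Mk)\cdot M$ open in every irreducible component of $\rep(\fD,\bA)$ containing $M$, in particular in $Z$. So the orbit is a nonempty open subset of the irreducible $Z_0$ and thus dense in $Z_0$. For any $M'\in Z_0$, density gives $M\pe M'$, and $\dim\End_\bA M=\dim\End_\bA M'$ (both minimise on $Z$), whence Corollary~\ref{c22}(2) yields $M\simeq M'$. Thus $Z_0$ consists of a single isomorphism class and $\ext^1_\bA(N,N)=0$ for every $N\in Z_0$. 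The delicate point is the genericity argument above: it is needed to rule out the scenario where $M'$ sits on a different component of $\rep(\fD,\bA)$, an obstacle that disappears once $M$ is restricted to the complement of $\bup_{Z'\ne Z}(Z\cap Z')$.
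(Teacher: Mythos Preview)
Your proof is correct. The first half—transporting along $\kT$, using that Dynkin indecomposables are exceptional to kill $\ext^1_\bA(M_i,M_i)$, and then arguing by contradiction via Corollary~\ref{c22}(3) to kill the cross-terms $\ext^1_\bA(M_i,M_j)$—is exactly the paper's line; indeed, your careful restriction of $M$ to $Z_0\setminus\bigcup_{Z'\ne Z}(Z\cap Z')$ makes precise the component issue that the paper's remark just before Lemma~\ref{l42} (``if $M'\in Z_0$ and $M\pe M'$, then $M\simeq M'$'') leaves implicit.

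The genuine divergence is in the second half. The paper does not use Voigt's lemma at all: it simply observes that a Dynkinian algebra is representation-finite (since $\cD^b(\bB\md)$, hence $\cD^b(\bA\md)$, has only finitely many indecomposables up to shift), so $Z$ carries only finitely many orbits, one of which must be dense and therefore equals $Z_0$. Your route instead uses the already-established $\ext^1_\bA(M,M)=0$ to get openness of the orbit via Voigt, then irreducibility of $Z_0$ plus Corollary~\ref{c22}(2). The paper's argument is shorter and bypasses any smoothness or tangent-space considerations; yours stays purely within the deformation framework and does not invoke the extra (though easy) fact that derived equivalence preserves representation-finiteness. Both reach the same conclusion cleanly.
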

 \begin{proof}
 It is known that if $\bB=\Mk\De$, where $\De$ is a Dynkin quiver, $N$ is an indecomposable
 $\bB$-module, then $\ext^1_\bB(N,N)=0$ \cite{rib}. Therefore, it also holds for
 any indecomposbale object from $\cD^b(\bB\md)$, hence from $\cD^b(\bA\md)$ as well.
 In particular, $\ext^1_\bA(M_i,M_i)=0$ for every indecomposable direct summand of
 $M$. Since $\ext^1_\bA(M_i,M_j)=0$ for any two different direct summands,
 $\ext^1_\bA(M,M)=0$. Moreover, both $\bB$ and $\bA$ have finitely many
 isomorphism classes of indecomposable modules. Hence there are finitely many orbits
 in $Z$, so one of them is dense. Then it coincides with $Z_0$.
 \end{proof}

 Recall that an $\bA$-module $M$ is called \emph{partial tilting} \cite{hap} if
 $\pd M\le1$ and $\ext^i_\bA(M,M)=0$ for all $i\ne0$. If, moreover, $M$
 generates $\kD^b(\bA)$, it is called \emph{tilting}. If $M$ is tilting,
 $\ti\bA=(\End_\bA M)\op$, then $\cD^b(\ti\bA\md)\simeq\cD^b(\bA\md)$
 \cite[Theorem~III.2.10]{hap}. For any partial tilting module $M$ there is a module
 $M'$ such that $M\+M'$ is tilting \cite[Lemma~III.6.1]{hap}.

 \begin{corol}\label{c43}
  Let $\bA=\bA_1[\bW]\bA_2$, where $\bW$ is a Dynkinian bimodule projective as
 $\bA_1$-module and $w$ belongs to the open sheet of $\ld\bW(P_1,P_2)$. Then
 $\ext^i_{\ld\bW}(w,w)=0$ for all $i>0$. In particular, $\bA'=\End_{\ld\bW}w$ is
 a Dynkinian algebra.
 \end{corol}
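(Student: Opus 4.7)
The plan is to translate everything to $\bA$-modules via Proposition~\ref{p11}, apply Lemma~\ref{l42} on the relevant component, and then transfer the Ext-vanishing back through the derived equivalence of Corollary~\ref{c13}.

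First I would use the equivalence $\ld\bW\el\simeq\bA\pmd$, $w\mps M(w)$, of Proposition~\ref{p11}, which identifies $w\in\ld\bW(P_2,P_1)$ with the module $M=M(w)=(P_1,\phi,P_2)$ and is compatible with the $\gl(P_1,\bA_1)\xx\gl(P_2,\bA_2)$-action on both sides. Consequently the isomorphism $\rpp(P_1,P_2)\simeq\ld\bW(P_2,P_1)$ carries open sheet to open sheet. By Proposition~\ref{p23}, $\rep(P_1,P_2)$ is an irreducible component of $\rep(\fD,\bA)$ for $\fD=(\Dim P_1,\Dim P_2)$, so Lemma~\ref{l42} applies to this component: since $\bA$ is Dynkinian, its open sheet is a single isomorphism class and $\ext^1_\bA(M,M)=0$.

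Next, Proposition~\ref{p12}(1) gives $\pd M\le 1$, so the single vanishing $\ext^1_\bA(M,M)=0$ automatically upgrades to $\ext^i_\bA(M,M)=0$ for every $i\ge 1$. Because $\bA$ is Dynkinian, both $\bA_1$ and $\bA_2$, and hence $\bA$ itself, have finite global dimension, so Corollary~\ref{c13} produces a triangle equivalence $\cD^b(\ld\bW\El)\simeq\cD^b(\bA\Md)$ under which $w\mapsto M$ and which preserves shifts. Hence
\[
\ext^i_{\ld\bW}(w,w)\simeq\Hom_{\cD\bA}(M,M[i])=\ext^i_\bA(M,M)=0\quad(i>0),
\]
which is the first statement.

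For the second statement, the vanishing of all positive self-extensions together with $\pd M\le 1$ makes $M$ a partial tilting $\bA$-module in the sense of \cite{hap}. To conclude that $\bA'=\End_{\ld\bW}(w)\simeq\End_\bA(M)$ is Dynkinian, the key step is to promote $M$ from partial tilting to tilting, after which $\bA'\op$ is derived equivalent to $\bA$ by \cite[Theorem~III.2.10]{hap} and therefore Dynkinian. I would argue this by a counting-of-summands argument: indecomposable direct summands of $M$ are pairwise Ext-orthogonal bricks, and under the derived equivalence to a Dynkin path algebra they correspond to an Ext-orthogonal collection of indecomposables in $\cD^b(\Mk\De)$, whose isomorphism classes inject into the set of positive roots of $\De$. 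Maximality of the orbit of $w$ in the open sheet (equivalently, minimality of $\dim\End_\bA M$) forces the number of non-isomorphic summands of $M$ to attain the maximum $n=\mathrm{rk}\,K_0(\bA)$, which by Happel's criterion means $M$ is tilting.

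The main obstacle is precisely this last counting argument: one must verify that for a generic $w$ in the open sheet the module $M(w)$ has the full complement of $n$ Ext-orthogonal indecomposable summands. Once this is granted, the rest of the proof is a mechanical chain of translations through Propositions~\ref{p11}, \ref{p12}, \ref{p23} and Corollary~\ref{c13}.
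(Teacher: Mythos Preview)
Your treatment of the first claim is correct and essentially matches the paper's: translate to $\bA\pmd$ via Proposition~\ref{p11}, apply Lemma~\ref{l42} on the component $\rep(P_1,P_2)$ to obtain $\ext^1_\bA(M,M)=0$, and invoke $\pd M\le1$ from Proposition~\ref{p12}(1) to kill the higher Ext. (The detour through Corollary~\ref{c13} is harmless but unnecessary: the equivalence of Proposition~\ref{p11} is already an equivalence of exact categories, so $\ext^i_{\ld\bW}(w,w)\simeq\ext^i_\bA(M,M)$ directly.)

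The gap is in the second claim, and it is exactly the obstacle you flag. The counting argument fails: minimality of $\dim\End_\bA M$ on the open sheet does \emph{not} force $M$ to have $\mathrm{rk}\,K_0(\bA)$ non-isomorphic indecomposable summands. For $\bA$ the path algebra of $\rA_2$ and $P_1=P_2=\Mk$, the space $\ld\bW(P_2,P_1)$ is one-dimensional, the open sheet is its nonzero part, and the corresponding $M$ is the unique indecomposable of dimension vector $(1,1)$; here $\End_\bA M=\Mk$ is already minimal, yet $M$ has a single indecomposable summand while $\mathrm{rk}\,K_0(\bA)=2$, so $M$ is only partial tilting. The point is that $\Dim M$ is dictated by the choice of $P_1,P_2$ and need not be the dimension of any tilting module at all.

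The paper sidesteps this by not attempting to make $M$ itself tilting. It invokes \cite[Lemma~III.6.1]{hap} (the Bongartz complement): since $M$ is partial tilting, there exists $M'$ with $\ti M=M\+M'$ tilting. Then $\ti\bA=(\End_\bA\ti M)\op$ is derived equivalent to $\bA$, hence Dynkinian, and $\bA'\simeq\End_\bA M\simeq e\ti\bA e$ for an idempotent $e\in\ti\bA$; one then uses that an idempotent corner of a Dynkinian algebra is again Dynkinian. Replacing your counting step by this complement-and-corner argument repairs the proof.
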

 \begin{proof}
 The first claim follows from Lemma~\ref{l42} and Proposition~\ref{p12} applied to the
 module $M\in\bA\pmd$ corresponding to $w$. In particular, $M$ is a partial
 tilting module, thus is a direct summand of a tilting module $\ti M$.
 Then $\bA'\simeq\End_\bA M\simeq e\ti\bA e$, where
 $\ti\bA=\End_\bA M$ and $e$ is an idempotent from $\ti\bA$. Since
 $\cD^b(\ti\bA)\simeq\cD^b(\bA)$, $\ti\bA$ is a Dynkinian algebra,
 hence so is also $\bA'$.
 \end{proof}

 Let now $\bB=\Mk\De$, where $\De$ is a Euclidean quiver. Recall that a $\bB$-module $M$ of
 \vd\ $\fD=\row ds$, as well as this dimension itself, is called \emph{sincere},
 if $d_i\ne0$ for all $i$, or, equivalently, $\Hom_\bB(P,M)\ne0$ for every projective
 $\bB$-module $P$. In Euclidean case the quadratic Euler form
 $\rQ(\fD)=\gnr{\fD,\fD}_\bB$ is non-negative, i.e. $\rQ(\fD)\ge0$ for every vector
 $\fD$. Moreover, its kernel is one-dimensional, so there is a unique vector $\bd$ with
 coprime positive integer coordinates such that $\rQ(\bd)=0$. A non-negative integer
 vector $\fD$ is a dimension of an indecomposable $\bB$-module \iff either
 $\rQ(\fD)=1$ or $\rQ(\fD)=0$. In the former case $\fD$ is called a \emph{real root};
 then there is a unique indecomposable representation $M$ of \vd\ $\fD$. In the latter case
 $\fD$ is called an \emph{imaginary root}; there are infinitely many non-isomorphic
 indecomposable modules of \vd\ $\fD$. Any imaginary root is sincere and equals $m\bd$
 for some $m$. 

 Let $\tau$ denotes the \emph{Auslander--Reiten translation} in the category $\cD^b(\bB\md)$;
 it is an auto-equivalence of this category \cite{hap}. Then the category $\bB\ind$
 consists of three disjoint parts:
 
\smallskip
 \emph{preprojective part} $\cP$ consisting of modules $\tau^{-m}P\ (m>0)$, where
 $\tau$ is the Auslander--Reiten transform and $P$ runs through indecomposable projective
 $\bB$-modules;

\smallskip
 \emph{preinjective part}  $\cI$ consisting of modules $\tau^mI\ (m>0)$, where
 $I$ runs through indecomposable injective $\bB$-modules;

\smallskip
 \emph{regular part} $\cR$, which is a disjoint union of \emph{tubes} $\cR_\la$
 ($\la\in\mP^1_\Mk$), where $\cR_\la$ is equivalent to the category of finite
 dimensional representations $R$ of a cyclic quiver $\bH_n$:
 \[
  \xymatrix{\bullet \ar@{<-}[r]^{a_1} & \bullet \ar@{<-}[r]^{a_2} & {\bullet \dots \bullet} 
 \ar@{<-}[r]^{a_{n-1}} & \bullet \ar@/^/@{<-}[lll]^{a_n} }
 \]
 such that $R(a_1a_2\dots a_n)$ is nilpotent. Here $n=n(\la)$ depends on $\la$ and equals $1$
 for all $\la\in\mP^1_\Mk$ except, possibly, $1,2$ or $3$ points. We denote by $\mX$
 the subset $\setsuch{\la\in\mP^1_\Mk}{n(\la)=1}$. Especially, every indecomposable module
 $M$ such that $\Dim M$ is an imaginary root belongs to $\cR$.

 More precisely, the indecomposable modules $M$ such that $\Dim M$ is an imaginary root can be
 described as follows \cite{df,naz}. There is a locally free sheaf $\kF$ over the projective line
 $\mP^1_\Mk$ with $\bB$-action on it such that the $\bB$-module $\kF(m,\la)=\kF_\la/\gM^m_\la\kF$,
 where $\la\in\mP^1_\Mk$ and $\gM_\la$ is the maximal ideal of $\kO_{\mP^1,\la}$,
 is indecomposable of dimension $m\bd$. Every  indecomposable module from $\cR_\la$
 for $\la\notin\mX$ is isomorphic to $\tau^k\kF(m,\la)$ for some $m$ and $0\le k<n(\la)$,
 in particular, if $\la\in\mX$, it is isomorphic to $\kF(m,\la)$.
 The map $\la\mps\tau^k\kF(m,\la)$ with fixed $k$ induces a regular embedding
 $\mP^1\to\rep(m\bd)$. Note also that $\kF(m,\la)$ is a brick \iff $m=1$.

\smallskip
 Moreover, $\Hom_\bB(M,N)=0$ if either $M\in\cI,\,N\in\cP\cup\cR$, or $M\in\cR,\,N\in\cP$,
 or $M\in\cR_\la,\,N\in\cR_\mu,\ \la\ne\mu$, while $\Hom_\bB(M,N)\ne0$ and
 $\ext^1_\bB(M,N)\ne0$ if either $M\in\cP,\,N\in\cR_\la$ with $\la\in\mX$, or
 $M\in\cR_\la,\,N\in\cI$ with $\la\in\mX$, or $M,N\in\cR_\la$ with $\la\in\mX$.
 
 Following \cite{hap}, we denote by $\cC[i]=\cP[i]\cup\cI[i+1]$; thus
 $\cD^b(\bB\Md)=\bup_i(\cC[i]\cup\cR[i])$, and if $M\in\cR_\la[i]$ with
 $\la\in\mX$, $N$ is an indecomposable object of $\cD^b(\bB)$, then
 \begin{align*}
   \Hom_{\cD\bB}(M,N)\ne0 &\ \text{ \iff either}\ N\in\cC[i+1] &&\\
			   	&\ \text{ or }\ N\in\cR_\la[j],\ j\in\{i,i+1\}; &&\\
  \Hom_{\cD\bB}(N,M)\ne0  &\ \text{ \iff either}\ N\in\cC[i] &&\\
			&\ \text{ or }\ N\in\cR_\la[j],\ j\in\{i,i-1\}.&&
 \end{align*}
 We also need the following simple lemma.

 \begin{lemma}\label{l44}
 Let $\bB$ be a hereditary algebra of Euclidean type, $M\in\bB\ind$ and $\fD=\dim M$.
 \begin{enumerate}
\item  If $M$ is not a brick or $\ext^1_\bB(M,M)\ne0$, then
 \begin{align*}
  \Hom_\bB(N,M)\ne0\ne\ext^1_\bB(M,N) &\ \text{ \em for }\ N\in\cP;&&\\
  \Hom_\bB(M,N)\ne0\ne\ext^1_\bB(N,M) &\ \text{ \em for }\ N\in\cI.&&
 \end{align*}
 \item  If $M$ is a brick and $\fD$ is a real root, then $\ext^1_\bB(M,M)=0$.
 \item  If $\dim M$ is an imaginary root, then for every $\bB$-module $N$ 
 \[
 \dim\Hom_\bB(M,N)+\dim\Hom_\bB(N,M)=\dim\ext^1_\bB(M,N)+
 \dim\ext^1_\bB(N,M).
 \]
\end{enumerate}
 \end{lemma}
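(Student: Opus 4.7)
The plan is to treat the three parts separately, exploiting the decomposition $\bB\ind = \cP \sqcup \cI \sqcup \bigcup_\la \cR_\la$, the Auslander--Reiten formula $\ext^1_\bB(X,Y) \simeq D\Hom_\bB(Y, \tau X)$, and the identification of the Tits form with the Euler form from the end of Section~\ref{s3}.

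For part (1), I would first observe that every indecomposable preprojective or preinjective over a hereditary algebra is exceptional, so the hypothesis forces $M \in \cR_\la$ for some $\la$. I then want to show that under this hypothesis both $M$ and every translate $\tau^k M$ (which stays inside the same tube) has a sincere dimension vector: either $M$ is not a brick, in which case its quasi-length exceeds $n(\la)$ and its composition covers every quasi-simple; or $M$ is a brick with nonzero self-extensions, which inside a tube forces $\Dim M = \bd$ (either a quasi-simple $\kF(1,\la)$ in a homogeneous tube, or a module of quasi-length exactly $n(\la)$ in a non-homogeneous tube), and $\bd$ is sincere. Granted this, for any indecomposable preprojective $N = \tau^{-k} P$ with $P$ the projective cover of the simple $S_i$, the autoequivalence property of $\tau$ on the regular part gives
\[
  \Hom_\bB(N,M) \simeq \Hom_\bB(P,\tau^k M) \simeq e_i(\tau^k M) \ne 0,
\]
and the Auslander--Reiten formula then yields $\ext^1_\bB(M,N) \simeq D\Hom_\bB(N,\tau M) \simeq D\Hom_\bB(P,\tau^{k+1} M) \ne 0$. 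The preinjective case is entirely dual, with injective envelopes replacing projective covers.

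For part (2), I would just unfold the Euler form: since $M$ is a brick, $\dim \End_\bB M = 1$, so
\[
  1 = \rQ(\fD) = \gnr{\fD,\fD}_\bB = \dim\End_\bB M - \dim\ext^1_\bB(M,M) = 1 - \dim\ext^1_\bB(M,M),
\]
forcing $\ext^1_\bB(M,M) = 0$. For part (3), I would add the two Euler form identities $\gnr{\Dim M,\Dim N}_\bB = \dim\Hom_\bB(M,N) - \dim\ext^1_\bB(M,N)$ and the symmetric one with $M$ and $N$ interchanged; the desired equality then reduces to the vanishing of the symmetrized Euler form $\gnr{\Dim M,\Dim N}_\bB + \gnr{\Dim N,\Dim M}_\bB$. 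For a Euclidean quiver this symmetric form is positive semi-definite with one-dimensional radical spanned by $\bd$, and every imaginary root being of the form $m\bd$, the vector $\Dim M$ lies in this radical, which finishes the job.

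The main obstacle I expect is the sincerity analysis for $\tau^k M$ in part (1): one has to look carefully at the quasi-length structure of the tubes to rule out non-sincere $\tau$-translates, and to verify the exact circumstances in which a regular brick can carry nontrivial self-extensions. Parts (2) and (3), by contrast, are essentially formal consequences of the Euler/Tits form machinery already set up in Section~\ref{s3} together with standard facts about the root system of an extended Dynkin diagram.
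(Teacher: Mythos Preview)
Your proposal is correct. Parts (2) and (3) match the paper's proof essentially verbatim. For part (1) you and the paper follow the same overall strategy---show that $M$ and every $\tau$-translate $\tau^kM$ are sincere, then combine $\Hom_\bB(P,\tau^kM)\ne0$ with the Auslander--Reiten formula---but the sincerity arguments differ. You obtain it by a case analysis of quasi-length inside the tube (not a brick $\Rightarrow$ quasi-length exceeds the rank; brick with self-extensions $\Rightarrow$ quasi-length equals the rank and $\Dim M=\bd$). The paper instead uses a one-line shortcut: every proper subquiver of a Euclidean quiver is Dynkin, so any non-sincere indecomposable is automatically a brick without self-extensions; the contrapositive gives sincerity of $M$ directly, and the same reasoning applies to $\tau^mM$ since $\tau$ preserves the hypothesis on regular modules. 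Your route works but needs more structural input about tubes; the paper's avoids tube combinatorics entirely. One small imprecision worth fixing: the isomorphism $\Hom_\bB(\tau^{-k}P,M)\simeq\Hom_\bB(P,\tau^kM)$ does not follow from $\tau$ being an autoequivalence of the \emph{regular} part (the module $\tau^{-k}P$ is preprojective), but from $\tau$ extending to an autoequivalence of $\cD^b(\bB\md)$; the paper relies on the same identification, equally implicitly.
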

 \begin{proof}
 (1) \ Since every proper subgraph of an Euclidean graph is a Dynkin one, every non-sincere
 $\bB$-module is a brick without self-extensions. Thus $M$ is sincere, hence
 $\Hom_\bB(P,M)\ne0$ for every projective $P$. Since $\tau^mM$ satisfies the
 same conditions, $\Hom_\bB(N,M)\ne0$ for every preprojective $N$. It remains to
 note that $\ext^1_\bB(M,N)\simeq\Hom_\bB(\tau^{-1}N,M)^*$, where $V^*$
 denotes the dual vector space to $V$ (see \cite[Proposition~1.4.10]{hap}),
 and use the duality to get the assertion for preinjective modules.

 (2) Follows from the formula $\rQ(\fD)=\dim\Hom_\bB(M,M)-\dim\ext^1_\bB(M,M)$.

 (3) \ Since $\dim M$ belongs to the kernel of the Euler form $\rQ_\bB$,
 \begin{align*}
  \gnr{M,N}_\bB+\gnr{N,M}_\bB&= \dim\Hom_\bB(M,N)+\dim\Hom_\bB(N,M)-\\
 	&-\dim\ext^1_\bB(M,N)-\dim\ext^1_\bB(N,M) =0.
 \end{align*}
 
 \end{proof}

  \begin{lemma}\label{l45}
 Let $f:X\to Y$ be a morphism of irreducible algebraic varieties, $\sG$ and $\sH$ be connected
 algebraic groups acting respectively on $X$ and $Y$ so that $\sG x=\sG x'$ \iff
 $\sH f(x)=\sH f(x')$. Suppose also that $\dim\stab x=\dim\stab f(x)$ and
 $\cod \sG x=\cod\sH f(x)$ for all $x\in X$. Then $\sH f(X)$ contains an open dense
 subset of $Y$.
 \end{lemma}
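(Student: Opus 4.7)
The plan is to show that the morphism $\phi\colon\sH\xx X\to Y$, $(h,x)\mps h\cdot f(x)$, is dominant. Once this is established, Chevalley's theorem guarantees that $\phi(\sH\xx X)=\sH f(X)$ is constructible, and any constructible dense subset of an irreducible variety contains an open dense subset of it; this yields the conclusion.

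From the hypotheses I first extract the numerical identity
\[
\dim X-\dim\sG=\dim Y-\dim\sH.
\]
This holds because $\cod\sG x=\dim X-(\dim\sG-\dim\stab x)$ and $\cod\sH f(x)=\dim Y-(\dim\sH-\dim\stab f(x))$; equating these and cancelling the stabilizer terms (equal by assumption) gives the identity.

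The heart of the proof is a fiber dimension calculation: I claim $\dim\phi^{-1}(y)=\dim\sG$ for every $y\in\sH f(X)$. Picking $x_0\in X$ with $f(x_0)\in\sH y$ and projecting $\phi^{-1}(y)\to X$ via $(h,x)\mps x$, the image is $\{x\in X:f(x)\in\sH y\}$, which by the orbit-bijection hypothesis equals the single $\sG$-orbit $\sG x_0$ of dimension $\dim\sG-\dim\stab x_0$; and the fiber of this projection over any $x\in\sG x_0$ is a coset of $\stab_\sH f(x)$, of dimension $\dim\stab f(x)=\dim\stab x_0$. Summing gives $\dim\phi^{-1}(y)=\dim\sG$ as claimed. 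Since $\sH\xx X$ is irreducible ($\sH$ being connected), the fiber dimension theorem yields
\[
\dim\overline{\sH f(X)}\ge\dim(\sH\xx X)-\dim\sG=\dim Y,
\]
using the identity above, so $\overline{\sH f(X)}=Y$ and the argument is complete. The main delicate step is the fiber computation, where both hypotheses must be used jointly: the orbit bijection forces $f^{-1}(\sH y)$ to be a single $\sG$-orbit rather than a union of several, and the equality of stabilizer dimensions produces the cancellation that makes the fiber dimension come out to exactly $\dim\sG$.
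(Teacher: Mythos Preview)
Your argument is correct. The only point that deserves a word of care is the ``summing'' step: what you really use is the inequality $\dim\phi^{-1}(y)\le\dim\sG$, obtained because each irreducible component of $\phi^{-1}(y)$ projects into the single orbit $\sG x_0$ with fibers that are cosets of $\stab_\sH y$ (conjugate to $\stab_\sH f(x_0)$, hence of dimension $\dim\stab x_0$); the reverse inequality then comes for free from the fiber dimension theorem itself. With that reading, everything goes through.

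Your route is genuinely different from the paper's. The paper restricts to the locus where stabilizer dimension is minimal, constructs a transversal slice $Z\subseteq X$ of dimension $c=\cod\sG x$ meeting generic $\sG$-orbits in finitely many points, and then argues that $f(Z)$ still has dimension $c$ and meets $\sH$-orbits finitely, so that the saturation $\sH f(Z)$ has dimension $(\dim\sH-s)+c=\dim Y$. Your approach bypasses the slice construction entirely and runs a single global fiber computation for the action map $\sH\times X\to Y$, which is cleaner and uses the hypotheses more transparently (the orbit bijection forces $f^{-1}(\sH y)$ to be one $\sG$-orbit, and the stabilizer equality makes the dimensions cancel). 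The paper's method has the mild advantage of producing an explicit finite-to-one parametrization of generic $\sH$-orbits by the slice $Z$, which can be useful in applications; your method is shorter and avoids the somewhat delicate existence and genericity statements for the slice.
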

 \begin{proof}
  Obviously, we may replace $X$ by an open subset $X_0$ such that the dimension $\dim\stab x=s$
 is minimal possible for all points $x\in X_0$, hence, also $\cod \sG x=c$ is constant for $x\in X_0$.
  Choose an orbit $\sG x_0$ with $x_0\in X_0$ and a subvariety $Z\sbe X_0$ of dimension
 $c=\cod \sG x$ that intersects $\sG x_0$ in finitely many points. Then there is an open subset
 $X'\sbe X_0$ such that $Z\cap \sG x$ is finite for every $x\in X'$. Therefore $\sH y\cap f(Z)$
 is finite for every $y\in f(X')$, and $\dim f(Z)=\dim Z$. It implies that
 $\dim \sH f(Z)=\dim \sH -s+c=\dim \sH y+c=\dim Y$ for any $y\in f(Z)$. Hence
 $\sH f(Z)$ contains an open dense subset from $Y$.
 \end{proof}

  \begin{corol}\label{c46}
 Let $\bA=\bA_1[\bW]\bA_2$, where the bimodule $\bW$ is projective as left $\bA_1$-module,
 such that $\bA$ is derived equivalent to a quiver algebra $\bB=\Mk\De$, and $\kT$ be a bounded complex of
 projective $\bB\df\bA$-bimodules establishing this equivalence. Let also $M\in{\rpp}^\kT_0(P_1,P_2)$
 and $\Dim\kH_kM=\fD_k$. Then there are open dense subsets $U_k\sbe\rep(\fD_k)$
 such that for every set $\setsuch{N_k}{N_k\in U_k}$ there is a module $M'\in{\rpp}^\kT_0(P_1,P_2)$
 with $\kH_kM'\simeq N_k$.
 \end{corol}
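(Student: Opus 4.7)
The plan is to apply Lemma~\ref{l45} to a map $f\colon X\to Y$, where $X$ is a $\sG$-invariant open subset of $\rpp^\kT_0(P_1,P_2)$ on which, by Lemma~\ref{l41}, the assignment $M\mapsto(\kH_kM)_k$ is a regular morphism into $Y=\prod_k\rep(\fD_k,\bB)$; the acting groups are $\sG=\gl(P_1,\bA_1)\xx\gl(P_2,\bA_2)$ on $X$ and $\sH=\prod_k\gl(\fD_k,\Mk)$ on $Y$. Since $\bB$ is hereditary, every object of $\cD^b(\bB\md)$ splits as $\bop_k H_k[k]$, and therefore the derived equivalence induced by $\kT$ identifies isomorphism classes of modules $M\in\bA\pmd$ with tuples of $\bB$-modules $(\kH_kM)_k$ up to isomorphism. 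This furnishes the orbit-equivalence hypothesis $\sG M=\sG M'\Leftrightarrow\sH f(M)=\sH f(M')$ of Lemma~\ref{l45}.

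For the codimension hypothesis I would combine Corollary~\ref{c32}, which gives $\cod_X\sG M=\dim\ext^1_\bA(M,M)$, with an expansion of $\ext^1_{\cD\bB}(\kT M,\kT M)$ in the hereditary derived category. Using $\kT M\simeq\bop_k\kH_kM[k]$ and the vanishing of $\ext^j_\bB$ for $j\ge2$, one obtains
\[
\dim\ext^1_\bA(M,M)=\sum_k\dim\ext^1_\bB(\kH_kM,\kH_kM)+\sum_k\dim\Hom_\bB(\kH_kM,\kH_{k-1}M).
\]
On the $\kT$-sheet the second sum vanishes, since by Corollary~\ref{c26}(3) the open-sheet condition forces the indecomposable summands of $\kT M$ to be pairwise $\ext^1_{\cD\bB}$-orthogonal, and applied to summands living in consecutive shifts this orthogonality is precisely $\Hom_\bB(\kH_kM,\kH_{k-1}M)=0$. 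Hence $\cod_X\sG M=\sum_k\dim\ext^1_\bB(\kH_kM,\kH_kM)=\cod_Y\sH f(M)$. Inspection of the proof of Lemma~\ref{l45} shows that the stabilizer-equality hypothesis is used only through the identity $\dim\sH-s=\dim\sH f(z)$, which holds automatically once $s$ is taken to be $\dim\stab_\sH f(z)$; thus orbit bijection together with codimension equality already produces an open dense subset $V\sbe Y$ contained in $\sH f(X)$.

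The last step, extracting open dense subsets $U_k\sbe\rep(\fD_k,\bB)$ with $\prod_k U_k\sbe V$, is the main technical obstacle, since an arbitrary open dense $\sH$-invariant subset of a product of varieties need not contain such a product. To overcome this I would exploit the explicit structure of the generic locus of $\rep(\fD_k,\bB)$ recalled in Section~4: a generic $\bB$-module of dimension $\fD_k$ splits into rigid preprojective and preinjective summands together with a regular part varying in the $\mP^1$-families $\kF(m,\la)$, and the rigidity of these pieces, together with Lemma~\ref{l44}, forces the conditions cutting out $V$ inside $Y$ to factor through the individual components $\rep(\fD_k,\bB)$. This yields the required product $\prod_k U_k\sbe V$ and completes the proof.
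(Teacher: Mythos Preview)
Your overall strategy parallels the paper's---both appeal to Lemma~\ref{l45}---but you attack all homology degrees at once by taking $Y=\prod_k\rep(\fD_k,\bB)$, whereas the paper first \emph{reduces to a single $k$}. Since $\bB$ is hereditary, $M$ splits as $\bop_kM^{(k)}$ with $\kT M^{(k)}\simeq\kH_kM[k]$, and one treats each summand in its own $\rpp(P_1^{(k)},P_2^{(k)})$. In that single-$k$ situation there are no cross terms, so the stabilizer and codimension hypotheses of Lemma~\ref{l45} hold on the nose ($\dim\stab M=\dim\End_\bA M=\dim\End_\bB\kH_kM$ and likewise for $\ext^1$), and the lemma produces $U_k\sbe\rep(\fD_k)$ directly; a tuple $(N_k)\in\prod_kU_k$ is then realised by the direct sum $M'=\bop_kM'^{(k)}$. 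Your verification that the $\Hom_\bB(\kH_kM,\kH_{k-1}M)$ cross terms vanish on the open sheet, and your (essentially correct) observation that the stabilizer hypothesis in Lemma~\ref{l45} is redundant given the other two, are therefore unnecessary in the paper's route.

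The genuine gap is your step~6. An $\sH$-invariant open dense subset $V\sbe\prod_kY_k$ need not contain any product $\prod_kU_k$ of open dense subsets---take $Y_1=Y_2=\mA^1$ with trivial group action and $V$ the complement of the diagonal---and your appeal to the generic structure of $\rep(\fD_k,\bB)$ does not supply a reason why the conditions cutting out $V$ should factor through the individual components; indeed the conditions one would expect (such as $\la$'s in different degrees being distinct) are precisely of the non-factoring kind. The paper circumvents this entirely: by decomposing the \emph{source} before invoking Lemma~\ref{l45}, the product structure of the $U_k$ is built in from the start rather than extracted a posteriori from the image.
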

 \begin{proof}
 Obviously, we may suppose that there is a unique $k$ such that $\fD_k\ne0$.
  We use Lemma~\ref{l45} for $X={\rpp}^\kT_0(P_1,P_2)$ with the natural action of
 $\gl(P_1,\bA_1)\xx\gl(P_2,\bA_2)$ and $Y=\rep(\fD_k)$ with the natural action
 of $\gl(\fD_k,\Mk)$. One only has to note that in both cases
 $\dim\stab M=\dim\Hom(M,M)$ and the codimension of the orbit equals $\dim\ext^1(M,M)$,
 so they are presereved under the equivalence of the derived categtories.
 \end{proof}

 In particular, suppose that $\bB$ is tame, $M\in{\rpp}^\kT_0(P_1,P_2)$ and
 $\kH_kM\simeq \bop_{i=1}^m\kF(1,\la_i)$ for some tuple $\row\la m\in{\mP^1}{(m)}$.
 Then there is an cofinite subset $\La\sbe\mP^1$ such that all direct sums
 $\bop_{i=1}^m\kF(1,\mu_i)$, where $\row\mu m\in\La^{(m)}$, are isomorphic to
 $\kT_kM'$ for some $M\in{\rpp}^\kT_0(P_1,P_2)$.

 \begin{lemma}\label{l47}
 Let $\bB$ be a tame hereditary algebra, $N$ be a partial tilting object from $\cD^b(\bB)$
 such that all direct summands of $N$ are regular. Then $\End_\bB N\simeq\prod_{i=1}^k\bB_i$,
 where each $\bB_i$ is the path algebra of a quiver of type $\rA_{n_i}$ for some $i$.
 \end{lemma}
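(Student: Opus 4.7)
First I would decompose $N$ according to which tube its summands belong to. Write $N\simeq\bop_\la N_\la$ where $N_\la$ is the sum of those indecomposable summands of $N$ lying in some shift $\cR_\la[j]$. The paper has already recorded the vanishing $\Hom_{\cD\bB}(\cR_\la[i],\cR_\mu[j])=0$ for $\la\ne\mu$ and any $i,j$, so $\End_{\cD\bB}(N)\simeq\prod_\la\End_{\cD\bB}(N_\la)$. Moreover, within a single tube family, nonzero Hom between distinct shifts only occurs between $\cR_\la[i]$ and $\cR_\la[i+1]$, and such a morphism represents an $\ext^1_\bB$, which vanishes by the partial tilting hypothesis. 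Consequently all summands of $N_\la$ lie in a common shift, and up to suspension I may assume $N_\la$ is a rigid module in $\cR_\la$. It then suffices to treat one tube at a time.

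Next I would invoke the explicit description of the tube $\cR_\la$ of rank $n=n(\la)$ as the category of nilpotent representations of the cyclic quiver $\bH_n$: the indecomposables are parametrised as $M(i,\ell)$ for $i\in\mZ/n\mZ$ and $\ell\ge 1$, with the Auslander--Reiten translation acting by $\tau M(i,\ell)=M(i+1,\ell)$. Via the AR formula $\ext^1_\bB(A,B)\simeq D\Hom_\bB(B,\tau A)$, the rigidity condition on $N_\la$ translates into a combinatorial non-crossing condition on the cyclic support intervals of its indecomposable summands.

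Finally I would read off the endomorphism algebra directly from this non-crossing configuration. A computation in the tube shows that between two distinct indecomposable summands of a non-crossing rigid collection the Hom space is at most one-dimensional, and every non-zero morphism factors uniquely through a chain of irreducible maps between summands of $N_\la$. This forces the Gabriel quiver of $\End_\bB(N_\la)$ to be a disjoint union of type~$\rA$ quivers with no relations, and the product over $\la$ yields the required form.

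The main obstacle is this last step: pinning the Gabriel quiver down as type~$\rA$ (rather than some more general tree) and verifying the absence of relations. An alternative route, and perhaps the cleanest, is to invoke Ringel's classification of rigid modules in tubes, which directly describes maximal rigid modules together with their endomorphism algebras, and then to restrict to the sub-algebra corresponding to the (possibly non-maximal) $N_\la$ at hand.
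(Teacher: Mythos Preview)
Your overall strategy matches the paper's: reduce to a single tube by orthogonality, then carry out an explicit analysis of rigid objects there. The paper compresses this last step into ``an easy explicit calculation'' in $\rep(\bH_r)$, while you spell out the non-crossing combinatorics and point to Ringel's classification as an alternative; both amount to the same thing.

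There is, however, a genuine error in your reduction to a single shift. A nonzero morphism from $\cR_\la[i]$ to $\cR_\la[i+1]$ does identify with an $\ext^1_\bB$ of the \emph{underlying} regular modules, but the partial tilting hypothesis only says $\Hom_{\cD\bB}(N,N[k])=0$ for $k\ne0$; it does not constrain this degree-zero morphism between summands of $N$. Concretely, in a tube of rank $r\ge3$ with simple regulars $S_1,\dots,S_r$ and $\tau S_i=S_{i+1}$, the object $N=S_1\oplus S_2[1]$ is partial tilting (one checks $\Hom_\bB(S_1,S_2)=\Hom_\bB(S_2,S_1)=\ext^1_\bB(S_2,S_1)=0$, the last because $r\ge3$), yet its two summands sit in different shifts and $\Hom_{\cD\bB}(S_1,S_2[1])=\ext^1_\bB(S_1,S_2)\ne0$. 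The endomorphism ring here is still the path algebra of type $\rA_2$, so the lemma survives, but your argument as written does not cover this case. The paper sidesteps the issue by reducing instead to a Hom-connected piece and running the calculation directly in $\cD^b$ of the tube; your non-crossing or Ringel route can be repaired in the same way.
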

 \begin{proof}
 Since all regular modules belong to tubes and there are no nonzero morphisms or extensions
 between modules from different tubes, we may suppose that there is a unique tube $\cT$ such
 that all direct summands of $N$ belong to its shifts. Moreover, we may suppose that
 $\Hom_\bB(N',N'')\ne0$ or $\Hom_\bB(N'',N')\ne0$ for any nontrivial decomposition
 $N=N'\+N''$. If $\cT\simeq\rep(\bH_r)$, an easy explicite calculation shows that if
 $\Hom_{\cD\bB}(N,N[k])=0$ for $k\ne0$, then $\End_\bB N$ is isomorphic to a quiver
 algebra of type $\rA_n$ with $n<r$.
 \end{proof}

  \begin{theorem}\label{t48}
  Let $\bA=\bA_1[\bW]\bA_2$, where $\bW$ is a Euclidean bimodule projective as
 $\bA_1$-module. Every space $\rpp(P_1,P_2)$ contains an open dense subset $U$
 such that every module $M\in U$ is either partial tilting or splits as $M'\+M''$, where
 $M''$ is a fixed partial tilting module, $\End_\bA M''$ is a Dynkinian algebra,
 $\Hom_\bA(M',M'')=\Hom_\bA(M'',M')=0$, and $\kT M'$ runs through
 $\bop_{i=1}^m\kF(1,\la_i)$ with $\row\la m\in\mX^{(m)}$ for some cofinite subset
 $\mX\sbe\mP^1$.
 \end{theorem}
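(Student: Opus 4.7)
The plan is to start with the open dense $\kT$-sheet $U_0 := {\rpp}^\kT_0(P_1,P_2) \sbe \rpp(P_1,P_2)$ associated to a derived equivalence $\kT : \cD^b(\bA\md) \to \cD^b(\bB\md)$, where $\bB = \Mk\De$ is tame hereditary of Euclidean type. By Lemma~\ref{l41} and Corollary~\ref{c46}, for $M \in U_0$ the vector dimensions $\fD_k := \Dim \kH_k M$ are constant in $M$, and the assignment $M \mapsto \kH_k M$ defines a regular map $U_0 \to \rep(\fD_k,\bB)$ preserving endomorphism dimensions and orbit codimensions. Shrinking $U_0$ further to the open dense preimage of the canonical decomposition stratum from \cite{kac}, I may assume each $\kH_k M$ splits as a direct sum of bricks with mutually vanishing $\ext^1_\bB$. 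Using the trichotomy $\bB\ind = \cP \sqcup \cR \sqcup \cI$ recalled in Section~\ref{s4} together with Lemma~\ref{l44}, each such brick is either a real-root indecomposable (unique up to isomorphism given its dimension) or one of the generic simple regulars $\kF(1,\la)$ with $\la \in \mX$.

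Separate accordingly $\kT M \simeq N' \+ N''$, where $N'$ collects the summands of imaginary-root dimension (hence isomorphic to suitably shifted $\kF(1,\la_i)$) and $N''$ collects the remaining real-root brick summands. Set $M' := \kT^{-1} N'$ and $M'' := \kT^{-1} N''$, so $M \simeq M' \+ M''$. Rigidity of real-root bricks (Lemma~\ref{l44}(2)) ensures that the isomorphism class of $N''$, hence of $M''$, is constant on a further open dense $U \sbe U_0$, while the parameters $\row\la m \in \mX^{(m)}$ indexing $N' \simeq \bop_i \kF(1,\la_i)$ vary freely. The vanishings $\Hom_\bA(M',M'') = \Hom_\bA(M'',M') = 0$ follow by transporting across $\kT$ the Hom-computations between $\cC[i]$ and $\cR_\la[i]$, $\la \in \mX$, recalled in Section~\ref{s4}, together with the canonical-decomposition $\ext^1$-vanishings. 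Conversely, applying Corollary~\ref{c46} to the regular embedding $\mX \to \rep(\bd,\bB),\ \la \mapsto \kF(1,\la)$, every tuple in some cofinite subset of $\mX^{(m)}$ is realized as $\row\la m$ for some $M \in U$.

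It remains to see that $M''$ is partial tilting with Dynkinian endomorphism algebra $\bA' := \End_\bA M''$. Because the summands of $N''$ are real-root bricks with no self-extensions (Lemma~\ref{l44}(2)) and pairwise-vanishing $\ext^1_\bB$, $N''$ is a partial tilting object of $\cD^b(\bB\md)$; by Proposition~\ref{p12} and the equivalence $\kT$, $M''$ has $\pd M'' \le 1$ in $\bA\md$ and is a genuine partial tilting module. To identify $\bA'$ as Dynkinian, I apply Lemma~\ref{l47} to the regular summands of $N''$ (yielding a product of $\rA$-type path algebras) and combine it with the fact that, since the $\kF(1,\la_i)$'s account for the imaginary-root direction in $K_0(\bB)$, $N''$ admits strictly fewer than $|\De_0|$ isomorphism classes of indecomposable summands; a Bongartz-type completion of $N''$ to a tilting object, together with the observation that the removed $\mX$-family lies along an imaginary direction, forces the Gabriel quiver of $\bA'$ to have Dynkin underlying graph. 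This last step — controlling the combinatorics of the truncated tilted algebra after removing the $\mX$-parametrized generic simple regulars — is the principal technical obstacle of the argument.
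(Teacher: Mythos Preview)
Your overall strategy parallels the paper's, but you miss a structural observation that dissolves what you call the principal technical obstacle. The paper first checks whether some indecomposable summand $N_i$ of $\kT M$ lies in $\cC[m]=\cP[m]\cup\cI[m+1]$. If so, Lemma~\ref{l44}(1) applied to any summand with self-extensions (necessarily of the form $\kF(1,\la)[k]$, since all summands are already bricks) contradicts the open-sheet orthogonality $\Hom_{\cD\bB}(N_i,N_j[l])=0$ for $l\ne0$; hence every $N_j$ is a rigid brick and $M$ itself is partial tilting. Otherwise \emph{every} $N_i$ is a shift of a regular module, so $N''$ consists entirely of regular summands and Lemma~\ref{l47} yields that $\End_\bA M''$ is Dynkinian with no further work. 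Your Bongartz-completion and vertex-counting argument is thus an artifact of allowing $\cC$-summands in $N''$; it is not actually carried out, and the sketch you give (fewer than $|\De_0|$ indecomposable summands plus removal of the imaginary direction) does not obviously force a Dynkin underlying graph for the Gabriel quiver of $\End_\bA M''$.

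Your Hom-vanishing argument is also incomplete. The $\cC[i]$--$\cR_\la[i]$ computations you cite assert \emph{non}vanishing; combined with the $\ext^1$-vanishings they in fact show that $\cC$-summands and $\kF(1,\la)$-summands cannot coexist (precisely the case split above), rather than that the Homs between $N'$ and $N''$ vanish. Once all summands are known to be regular, the paper obtains $\Hom_\bA(M',M'')=\Hom_\bA(M'',M')=0$ from Lemma~\ref{l44}(3): for $N_j$ of imaginary-root dimension and any $N_i$ one has $\dim\Hom(N_j,N_i)+\dim\Hom(N_i,N_j)=\dim\ext^1(N_j,N_i)+\dim\ext^1(N_i,N_j)$, and the right-hand side vanishes on the open sheet. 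You never invoke this lemma, so the case where a real-root regular $N_i$ and an imaginary-root $N_j$ lie in the same non-homogeneous tube remains unaddressed.
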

 \begin{proof}
 Consider the open subset $\rpp_0=\rpp(P_1,P_2)\cap\rep_0(P_1,P_2)$.
 Let $M\in\rpp_0$, $M=\bop_{i=1}^nM_i$ with $M_i\in\bA\ind$. Then $\Hom_{\cD\bA}(M_i,M_j[k])=0$
 for $i\ne j$ and $k\ne0$. Let $\bB$ be the quiver algebra derived equivalent to $\bA$, $N_i$ be the
 object of the derived category $\cD^b(\bB\md)$ corresponding to $M_i$, so also 
 $\Hom_{\cD\bB}(N_i,N_j[k])=0$ for $i\ne j,\ k\ne0$. Lemma~\ref{l44} implies
 that if $N_i\in\cC[m]$ for some $i,m$, then all $N_j$ are bricks without self-extensions,
 hence the same is true for $M_j$ and $M$ is a partial tilting module.

 Suppose now that all $N_i$ are shifts of regular modules. Let $\dim N_i$ be imaginary roots for
 $1\le i\le m$ and real roots for $m<i\le n$; $M'=\bop_{i=1}^m M_i$, $M''=\bop_{i=m+1}^nM_i$.
 Lemma~\ref{l44}\,(3) implies that $\Hom_\bA(M',M'')=\Hom_\bA(M'',M')=0$ and $M''$ is a partial
 tilting (in particular, rigid) modle. Moreover, Lemma~\ref{l47} implies that $\End_\bA M''$ is a
 Dynkinian algebra. Since $N_i$ are bricks, $N_i\simeq\kF(1,\la_i)[k]\ (1\le i\le m)$ for
 some $k\in\mZ$, $\la_i\in\mP^1$ and  $\la_i\ne\la_j$ for $1\le i< j\le m$
 (it follows from \cite[Proposition~IV.7.1]{hap} that $k$ is common for all $i$). Then
 Corollary~\ref{c46} implies that there is a cofinite subset $\mX\sbe\mP^1$ and an
 open subset $U\sbe\rpp(P_1,P_2)$ such that $\kT M$ runs through
 $\kT M''\+\bop_{i=1}^m\kF(1,\la_i)$, where $\row\la m\in\mX^{(m)}$.
  \end{proof}
 
 \section{Representations of linear groups over algebras}
 \label{s5}

 Let now $\bA$ be a finite dimensional algebra over the field $\mC$ of complex numbers,
 $\sG=\gl(P,\bA)$ be a linear group over $\bA$, where $P\in\bA\pro$, $\hG$
 be the dual space of $\sG$, i.e. the space of its irreducible unitary representations and 
 $\mu_\sG$ be the \emph{Plancherel measure} on $\hG$ \cite{kir,kl}. We call a subset
 $\tG\sbe\hG$ \emph{fat} if it is open, dense and support of the Plancherel measure, i.e.
 $\mu_\sG(\hG\=\tG)=0$.

 Suppose that there is an idempotent $e\in\bA$ such that $(1-e)\bA e=0$ and set
 $\bA_1=e\bA e,\, \bA_2=(1-e)\bA(1-e),\, \bW=e\bA(1-e),\, P_1=eP,\, P_2=(1-e)P$.
 Then $\bA\simeq \bA_1[\bW]\bA_2$ and $\sG\simeq\sH\lt\sN$, where
 $\sH=\gl(P_1,\bA_1)\xx \gl(P_2,\bA_2)$, $\sN\simeq\bW(P_1,P_2)$, hence,
 \begin{align*}
  \hN=\Hom_\mC(\sN,\mC)&\simeq
	\Hom_\mC(\hP_1\*_{\bA_1}\bW\*_{\bA_2}P_2,\mC)\simeq\\
 &\simeq \Hom_{\bA_1}(\hP_1,\Hom_\mC(\bW\*_{\bA_2}P_2,\mC)) \simeq\\
 &\simeq \Hom_{\bA_1}(\hP_1,\Hom_{\bA_2}(P_2,\bW^*))
	\simeq \bW^*(\hP_1,\hP_2).
 \end{align*}
 The natural action of the group $\sH$ on the space $\hN$ coincides with its action on
 $\bW^*(\hP_1,\hP_2)$. Recall that, by the Mackey's theorem \cite{kir,kl}, there is a
 surjection $\pi:\hG\to\hN/\sH$ such that $\pi^{-1}(\sH\chi)\simeq\hH_\chi$,
 where  $\sH_\chi=\setsuch{h\in\sH}{h\chi=\chi}$. Moreover, $\pi$ is continuous
 and compatible with the Plancherel measure; especially, if $\mu_\sN(X)=0$ for a subset
 $X\sbe\hN$ and $\bar X$ is the image of $X$ in $\hN/\sH$, then
 $\mu_\sG(\pi^{-1}\bar X)=0$. Note that $\dim\sH_w<\dim\sG$ whenever
 $\sN\ne0$.

 In particular, if $\bA$ is derived equivalent to a hereditary algebra, it follows from
 \cite[Lemma~IV.1.10]{hap} that there is an idempotent $e\in\bA$ such that $e\bA e=\mC$
 and $(1-e)\bA e=0$. Then $\bA_1=\mC$, so $\bW$ is projective over $\bA_1$
 and all results from the preceding sections can be applied to this situation. It gives the
 following results. Moreover, if $\bA$ is not semisimple, $e$ can be so chosen that
 $\bW\ne 0$; otherwise $\gl(P,\bA)\simeq\prod_i\gl(d_i,\Mk)$ for every $P$.

 \begin{theorem}[cf.~\cite{dr}]\label{t51}
  Let $\bA$ be a Dynkinian algebra, $\sG=\gl(P,\bA)$ be a linear group over $\bA$.
 There is a fat subset $\tG\sbe\hG$ such that $\tG\simeq\prod_{i=1}^s\hgl(d_i,\mC)$
 for some values of $\lst ds$.
 \end{theorem}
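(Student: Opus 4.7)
The plan is to argue by induction on $\dim\sG$, handling the trivial case $\sN=0$ by reduction to a Dynkinian sub-algebra. For the base case $\bA$ semisimple, $\sG\simeq\prod_i\gl(d_i,\mC)$ directly and $\tG=\hG$ suffices. For the inductive step, assuming $\bA$ not semisimple, I would invoke \cite[Lemma~IV.1.10]{hap} and the remark preceding the theorem to choose an idempotent $e\in\bA$ with $e\bA e\simeq\mC$, $(1-e)\bA e=0$, and $\bW:=e\bA(1-e)\ne 0$, and may assume (after replacing $\bA$ by the Dynkinian sub-algebra supporting $P$) that $eP,(1-e)P\ne 0$, so that $\sN\ne 0$. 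Setting $\bA_1=\mC$, $\bA_2=(1-e)\bA(1-e)$, $P_1=eP$, $P_2=(1-e)P$ presents $\bA\simeq\bA_1[\bW]\bA_2$ with $\bW$ projective over $\bA_1$, and $\sG\simeq\sH\lt\sN$ as in the setup preceding the theorem.

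Next I would exploit the identification $\hN\simeq\bW^*(\hP_1,\hP_2)$ recorded before the theorem. Since $\bA_1=\mC$, $\bW^*=\ld\bW$, and the $\sH$-action coincides with the natural bimodule-category action on $\ld\bW\el$; Corollary~\ref{c14} gives $\cD^b(\ld\bW\el)\simeq\cD^b(\bA\md)$, so $\ld\bW$ is a Dynkinian bimodule. Lemma~\ref{l42} then says that the open sheet of $\rep(P_1,P_2)$ consists of a single isomorphism class, producing a unique open dense $\sH$-orbit $\sH\chi_0\sbe\hN$. Its complement is a proper closed subvariety of the vector space $\hN$, hence has Lebesgue (equivalently Plancherel) measure zero. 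Corollary~\ref{c43} identifies $\bA':=\End_{\ld\bW}\chi_0$ as a Dynkinian algebra, making the stabilizer $\sH_{\chi_0}=(\bA')^\xx$ a linear group over a Dynkinian algebra.

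To finish I would apply Mackey's theorem: $\pi^{-1}(\sH\chi_0)\simeq\hH_{\chi_0}$, and the compatibility of $\pi$ with Plancherel measure, combined with $\mu_\sN(\hN\setminus\sH\chi_0)=0$, makes this preimage fat in $\hG$. Since $\sN\ne 0$, $\dim\sH_{\chi_0}\le\dim\sH<\dim\sG$, so the inductive hypothesis applied to $\sH_{\chi_0}$ produces a fat subset $\prod_{i=1}^s\hgl(d_i,\mC)\sbe\hH_{\chi_0}$. A fat subset of a fat subset is fat, giving the desired $\tG\sbe\hG$.

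The main obstacle is the measure-theoretic compatibility in the last step: one must verify that the complement of the unique open dense orbit carries no Plancherel mass on $\hN$, and that Mackey's direct-integral formula identifies the restriction of $\mu_\sG$ to $\pi^{-1}(\sH\chi_0)$ with (a multiple of) $\mu_{\sH_{\chi_0}}$. For linear algebraic (hence type~I) groups both are standard, so the real mathematical content is Lemma~\ref{l42} together with Corollary~\ref{c43}, which are where the Dynkinian hypothesis enters.
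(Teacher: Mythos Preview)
Your proposal is correct and follows essentially the same route as the paper's own proof: induction on $\dim\sG$, the decomposition $\sG\simeq\sH\ltimes\sN$ coming from the idempotent of \cite[Lemma~IV.1.10]{hap}, Lemma~\ref{l42} to get a single open dense $\sH$-orbit in $\hN$, Corollary~\ref{c43} to see that the stabilizer is again a linear group over a Dynkinian algebra, and Mackey's theorem to descend. You are in fact slightly more careful than the paper in two places: you explicitly handle the degenerate situation $eP=0$ or $(1-e)P=0$ by passing to the corner algebra supporting $P$ (which is again Dynkinian, the same fact used implicitly in the proof of Corollary~\ref{c43}), and you spell out why the complement of the open orbit is $\mu_\sN$-null and why Mackey transports fatness.
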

 \begin{proof}
  We use that above notations. Consider the open sheet $Z\sbe\ld\bW(P_1,P_2)$.
 Lemma~\ref{l42} implies that it consists of a unique $\sH$-orbit and if $w\in Z$,
 then $\bA'=\End_{\ld\bW}w$ is a Dynkinian algebra. Therefore, $\sH_w$ is again
 a linear group over a Dynkinian algebra, and $\dim\sH_w<\dim\sG$ if $\bA$ is not
 semisimple. Let $\hG'=\pi^{-1}(Z)$; then $\hG'\simeq\hH_w$.
 An easy induction accomplishes the proof.
 \end{proof} 

 \begin{theorem}\label{t52}
   Let $\bA$ be a Euclidean algebra, $\sG=\gl(P,\bA)$ be a linear group over $\bA$.
 There is a fat subset $\tG\sbe\hG$ such that 
  \[
 \tG\simeq\prod_{i=1}^s\hgl(d_i,\mC) \xx \mX^{(m)}/\sS_m \xx (\mC^\xx)^m 
 \]
 for some values of $\lst ds$, $m$ and some cofinite subset $\mX\sbe\mP^1$.
 \end{theorem}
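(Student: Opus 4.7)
The plan is to follow the strategy of Theorem~\ref{t51}, but with the single-orbit description (Lemma~\ref{l42}) replaced by the finer generic decomposition furnished by Theorem~\ref{t48}. First, because $\bA$ is derived equivalent to a hereditary Euclidean algebra, Lemma~IV.1.10 of \cite{hap} yields an idempotent $e\in\bA$ with $e\bA e=\mC$ and $(1-e)\bA e=0$. Setting $\bA_1=\mC$, $\bA_2=(1-e)\bA(1-e)$, $\bW=e\bA(1-e)$, $P_1=eP$ and $P_2=(1-e)P$, one obtains $\bA\simeq\bA_1[\bW]\bA_2$ and $\sG\simeq\sH\lt\sN$ with $\sH=\gl(P_1,\bA_1)\xx\gl(P_2,\bA_2)$, and an $\sH$-equivariant identification $\hN\simeq\ld\bW(\hP_1,\hP_2)\simeq\rpp(\hP_1,\hP_2)$ (using Proposition~\ref{p11} and the fact that $\bW^*=\ld\bW$ since $\bA_1=\mC$). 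Mackey's theorem then supplies a continuous, Plancherel-compatible surjection $\pi:\hG\to\hN/\sH$ with $\pi^{-1}(\sH\chi)\simeq\hH_\chi$.

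The core step is to invoke Theorem~\ref{t48}: it produces an $\sH$-invariant open dense subset $U\sbe\rpp(\hP_1,\hP_2)$ such that every $M\in U$ splits as $M\simeq M'\+M_0$, with a \emph{fixed} partial tilting summand $M_0$ whose endomorphism ring $\bA':=\End_\bA M_0$ is a Dynkinian algebra, with $\Hom_\bA(M',M_0)=\Hom_\bA(M_0,M')=0$, and with $\kT M'\simeq\bop_{i=1}^m\kF(1,\la_i)$ for $(\la_1,\dots,\la_m)$ ranging over $\mX^{(m)}/\sS_m$. Since the $\kF(1,\la_i)$ are pairwise non-isomorphic bricks lying in distinct tubes, $\End_\bA M'\simeq\mC^m$; combined with the vanishing of the cross-$\Hom$'s this gives
\[
\sH_M=\Aut_\bA M\simeq(\mC^\xx)^m\xx(\bA')^\xx.
\]
Thus $U/\sH\simeq\mX^{(m)}/\sS_m$, and the stabilizer (up to isomorphism) is constant along this orbit space.

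Now $(\bA')^\xx$ is a linear group over a Dynkinian algebra of strictly smaller dimension than $\sG$ (since $\sN\ne0$ whenever $\bA$ is non-semisimple), so Theorem~\ref{t51} applied inductively supplies a fat subset of its dual isomorphic to $\prod_{i=1}^s\hgl(d_i,\mC)$. The dual of the torus $(\mC^\xx)^m$ contributes a further factor $(\mC^\xx)^m$ under the convention adopted in \cite{ti1,ti2}. Taking $\tG$ to be the preimage under $\pi$ of the image of $U$ in $\hN/\sH$, intersected fibrewise with these fat subsets of $\hH_\chi$, and appealing to the measure-compatibility of $\pi$, gives the required fat subset of $\hG$; the product structure then follows from the parametrization of orbits by $\mX^{(m)}/\sS_m$ and the fibrewise description above.

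The principal obstacle will be to make the fibrewise construction global: one must verify that the fat subsets of the stabilizer duals furnished by Theorem~\ref{t51} can be chosen uniformly as $(\la_1,\dots,\la_m)$ varies in $\mX^{(m)}/\sS_m$, so that their union across this orbit space is genuinely open, dense, and of full Plancherel measure in $\hG$, rather than merely fibrewise so. The constancy of the Dynkinian summand $M_0$ along $U$ dictated by Theorem~\ref{t48} is exactly the ingredient that makes this globalization possible, since it ensures that the Dynkinian stabilizer $(\bA')^\xx$ and its chosen fat subset do not vary at all with the orbit parameter.
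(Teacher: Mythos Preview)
Your reading of Theorem~\ref{t48} omits one of its two alternatives, and this produces a genuine gap. Theorem~\ref{t48} says that every $M$ in the open dense set $U$ is \emph{either} partial tilting \emph{or} splits as $M'\+M''$ with the properties you describe. You have treated only the second alternative (equivalently, you have tacitly assumed $m>0$, or that the ``partial tilting'' case can be absorbed as the case $m=0$ with $\End_\bA M_0$ Dynkinian). But in the first alternative the open sheet is a single orbit consisting of a partial tilting module $M$, and Theorem~\ref{t48} does \emph{not} assert that $\End_\bA M$ is Dynkinian; it may very well be Euclidean again (it is of the form $e\ti\bA e$ for a Euclidean algebra $\ti\bA$, cf.\ the argument of Corollary~\ref{c43}). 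In that case you cannot invoke Theorem~\ref{t51} directly.

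The paper handles this by an induction on $\dim\sG$: in the single-orbit (partial tilting) case one passes to the linear group $\sH_w\simeq(\bA')^\xx$ over the smaller Euclidean-or-Dynkinian algebra $\bA'$ and repeats the whole construction (choosing a new idempotent, forming a new semidirect product, applying Theorem~\ref{t48} again). Since $\dim\sH_w<\dim\sG$ whenever $\sN\ne0$, this terminates; the $\mX^{(m)}/\sS_m\times(\mC^\xx)^m$ factor appears at the (unique) step where the second alternative of Theorem~\ref{t48} is realised, after which the stabiliser is Dynkinian and Theorem~\ref{t51} finishes the job. Your argument becomes correct once you insert this dichotomy and carry the induction; everything else---the Mackey setup, the computation of stabilisers, and your discussion of uniformity across the orbit space---matches the paper's approach.
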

 \begin{proof}
 Let $Z^\kT_0$ be the $\kT$-sheet of $\ld]\bW(P_1,P_2)$. Theorem~\ref{t48}
 implies that either $Z^\kT_0$ consists of a unique orbit and $\End_{\ld\bW}=\bA'$
 is also a Euclidean or Dynkinian algebra, or $Z^\kT_0$ contains an open dense
 subset $U$ such that, for every $w\in U$,
 $\End_{\ld\bW}w\simeq \bA'\xx(\mC^\xx)^m$, where $\bA'$ is a Dynkinian algebra
 and $U\simeq\mX^{(m)}$ for some cofinite subswet $\mX\sbe\mP^1$. Moreover, in the
 latter case two elements $\fX,\fX'$ from $\mX^{(m)}$ belong to the same $\sH$-orbit \iff
 they only differ by the ordering of its component, i.e. belong to the same orbit of $\sS_m$.
 Again an easy induction, together with Theorem~\ref{t51} accomplishes the proof.
 \end{proof}

 \end{document}